\newtheorem{thm}{Theorem}[section]
\newtheorem{cor}[thm]{Corollary}
\newtheorem{prop}[thm]{Proposition}
\newtheorem{lem}[thm]{Lemma}
\theoremstyle{definition}
\newtheorem{defin}[thm]{Definition}
\newtheorem*{xrem}{Remark}
\numberwithin{equation}{section}
\def\eq#1{{\rm(\ref{#1})}}
\def\Eq#1#2{\ifthenelse{\equal{#1}{*}}
  {\begin{equation*}\begin{aligned}[]#2\end{aligned}\end{equation*}}
  {\begin{equation}\begin{aligned}[]\label{#1}#2\end{aligned}\end{equation}}}
\def\A{\mathscr{A}}
\def\D{\mathscr{D}}
\def\E{\mathscr{E}}
\def\G{\mathscr{G}}
\def\M{\mathscr{M}}
\def\P{\mathscr{P}}
\newcommand{\operator}[1]{\mathop{\vphantom{\sum}\mathchoice
{\vcenter{\hbox{\LARGE $#1$}}}
{\vcenter{\hbox{\large $#1$}}}{#1}{#1}}\displaylimits}
\def\Mm{\operator{\mathscr{M}}}
\def\Mst_#1^#2{\operator{\mathscr{M}_{\mbox{\scriptsize$\#$}}\!\!}_{#1}^{#2}\,\,}
\def\Ef{\operator{\E_{\!\mbox{\scriptsize$f$}}}}
\newcommand\R{\mathbb{R}}
\newcommand\N{\mathbb{N}}
\newcommand\Z{\mathbb{Z}}
\newcommand\Q{\mathbb{Q}}
\newcommand{\QA}[1]{\A_{#1}}
\newcommand{\abs}[1]{\left| #1 \right| }
\newcommand{\ceil}[1]{\left\lceil #1 \right\rceil}
\newcommand{\vone}{\textbf{1}}
\DeclareMathOperator{\sign}{sign}
\newcommand{\Hc}[2][SKIPPED]{
\ifthenelse{\equal{#1}{SKIPPED}}
  {
    \ifthenelse{\equal{#2}{}}
      {\mathscr{H}}
      {\mathscr{H}(#2)}
  }
  {
    \ifthenelse{\equal{#2}{}}
      {\mathscr{H}_{#1}}
      {\mathscr{H}_{#1}(#2)}
  }
}
\newcommand{\Est}[2][SKIPPED]{
\ifthenelse{\equal{#1}{SKIPPED}}
  {
    \ifthenelse{\equal{#2}{}}
      {\mathscr{C}}
      {\mathscr{C}(#2)}
  }
  {
    \ifthenelse{\equal{#2}{}}
      {\mathscr{C}_{#1}}
      {\mathscr{C}_{#1}(#2)}
  }
}
\title
{On Hardy type inequalities for weighted quasideviation means}
\author{Zsolt P\'ales}
\address{Institute of Mathematics, University of Debrecen, Pf.\ 400, 4002 Debrecen, Hungary}
\email{pales@science.unideb.hu}
\author{Pawe\l{} Pasteczka}
\address{Institute of Mathematics, Pedagogical University of Krak\'ow,  Podchor\k{a}\.{z}ych str 2, 30-084 Krak\'ow, Poland}
\email{pawel.pasteczka@up.krakow.pl}
\thanks{The research of the first author was supported by the EFOP-3.6.1-16-2016-00022 and the EFOP-3.6.2-16-2017-00015 projects. These projects are co-financed by the European Union and the European Social Fund.}
\keywords{Weighted mean, Hardy inequality, Hardy constant, quasiarithmetic mean, quasideviation mean, Jensen concavity}
\subjclass[2010]{26D15, 26E60, 39B62, 40D25}
\begin{document}
\begin{abstract}
 Using recent results concerning the homogenization and the Hardy property of weighted means, we establish sharp Hardy constants for concave and monotone weighted quasideviation means and for a few particular subclasses of this broad family. More precisely, for a mean $\mathscr{D}$ like above and a sequence $(\lambda_n)$ of positive weights such that $\lambda_n/(\lambda_1+\dots+\lambda_n)$ is nondecreasing, we determine the smallest number $H \in (1,+\infty]$ such that
 $$
 \sum_{n=1}^\infty
 \lambda_n \mathscr{D}\big((x_1,\dots,x_n),(\lambda_1,\dots,\lambda_n)\big) \le H \cdot \sum_{n=1}^\infty \lambda_n x_n \text{ for all }x \in \ell_1(\lambda).
 $$
It turns out that $H$ depends only on the limit of the sequence $(\lambda_n/(\lambda_1+\dots+\lambda_n))$ and the behaviour of the mean $\mathscr{D}$ near zero.
\end{abstract}
\maketitle

\section{Introduction}
In 1920's several authors, motivated by a conjecture of Hilbert, proved that 
\Eq{H}{
  \sum_{n=1}^\infty \P_p(x_1,\dots,x_n) \le C(p) \sum_{n=1}^\infty x_n
}
for every sequences $(x_n)_{n=1}^\infty$ with positive terms, where $\P_p$ denotes the $p$-th \emph{power mean} 
(extended to the limiting cases $p=\pm\infty$),
\Eq{*}{
C(p):=
\begin{cases} 
1 & p=-\infty, \\
(1-p)^{-1/p}&p \in (-\infty,0) \cup (0,1), \\ 
e & p=0, \\
\infty & p\in[1,\infty],
\end{cases} 
}
and this constant is sharp, i.e., it cannot be diminished. 

The first result of this type with a nonoptimal constant was established by Hardy in \cite{Har20a}. Later this result was improved and extended by Landau \cite{Lan21}, Knopp \cite{Kno28}, and Carleman \cite{Car32} whose results are summarized in the inequality \eq{H}. Meanwhile, Copson \cite{Cop27} adopted Elliott's \cite{Ell26} proof of the Hardy inequality and showed (in an equivalent form) that if $\P_p(x,\lambda)$ denotes the $p$-th \emph{$\lambda$-weighted power mean} of the vector $x$, then
\Eq{E:EllCop}{
\sum_{n=1}^{\infty} \lambda_n\P_p\big((x_1,\dots,x_n),(\lambda_1,\dots,\lambda_n)\big) \le C(p) \sum_{n=1}^{\infty} \lambda_nx_n
}
for all $p\in (0,1)$, and sequences $(x_n)_{n=1}^\infty$ and $(\lambda_n)_{n=1}^\infty$ with positive terms. For more details about the history of the developments related to Hardy type inequalities, see papers Pe\v{c}ari\'c--Stolarsky \cite{PecSto01}, Duncan--McGregor \cite{DucMcG03}, and the book of Kufner--Maligranda--Persson \cite{KufMalPer07}. 

Obviously, the constant $C(p)$ is sharp if we require the inequality to be valid for all positive sequences $\lambda$ and $x$. One of the main goal of this presentation is to determine the best possible constant $C_\lambda(p)$ such that the inequality \eq{E:EllCop} be valid with $C(p)$ replaced by $C_\lambda(p)$ for all positive sequences $x$. Moreover, we will extend this result also for the case $p\leq0$. In fact, under some additional assumptions, we will show that $C_\lambda(p)$ is function of $p$ and the limit of the sequence $\big(\frac{\lambda_n}{\lambda_1+\cdots+\lambda_n}\big)$. On the other hand, our results will be developed not only for power means, but in a much larger class of weighted means, in the class of weighted quasideviation means which includes quasiarithmetic and also Gini means. The motivation for this paper originates from the paper \cite{PalPas18a} related to the nonweighted and homogeneous case.

\section{Weighted means}

For $n\in\N$, define the set of $n$-dimensional real weight vectors $W_n$ by
\Eq{*}{
  W_n:=\{(\lambda_1,\dots,\lambda_n)\in \R^n\mid\lambda_1,\dots,\lambda_n\geq0,\,\lambda_1+\dots+\lambda_n>0\}
}
and let 
\Eq{*}{
  W_0:=\{(\lambda_n)_{n\in\N}\mid\lambda_1>0 \mbox{ and } \lambda_2,\dots,\lambda_n,\ldots\geq0\}.
}
Now we recall the concept of a weighted mean as it was introduced in the paper \cite{PalPas18b}. 

For a given subinterval $I \subset \R$, \emph{a weighted mean on $I$}  is a function 
\Eq{*}{
\M \colon \bigcup_{n=1}^{\infty} I^n \times W_n \to I
}
which is nullhomogeneous in the weights, admits the reduction principle, the mean value property, and the elimination principle (see \cite{PalPas18b} for the details). For $n\in\N$ and $(x,\lambda) \in I^n\times W_n$, we will frequently use the sum type abbreviation:
\Eq{*}{
\Mm_{i=1}^n(x_i,\lambda_i):=\M\big((x_1,\dots,x_n),(\lambda_1,\dots,\lambda_n)\big).
}

Let us now introduce some important properties of weighted means. A weighted mean $\M$ is said to be \emph{symmetric}, if for all $n \in \N$, 
$(x,\lambda) \in I^n\times W_n$, and $\sigma \in S_n$, 
\Eq{*}{
\M(x,\lambda) =\M(x\circ\sigma,\lambda\circ\sigma).
}
We will call a weighted mean $\M$ \emph{Jensen concave} if, for all $n \in \N$, $x,y \in I^n$ and $\lambda \in W_n$,
\Eq{E:JF2}{
\M \Big( \frac{x+y}2 , \lambda \Big) \ge\frac12 \big( \M(x,\lambda)+\M(y,\lambda) \big).
}
If the above inequality holds with reversed inequality sign, then we speak about the \emph{Jensen convexity} of $\M$. Using that the mapping 
$x\mapsto\M(x,\lambda)$ is locally bounded, the Bernstein--Doetsch Theorem \cite{BerDoe15} implies that $\M$ is in fact concave or convex, respectively. 

A weighted mean $\M$ is said to be \emph{monotone} (or \emph{nondecreasing}) if, for all $n \in \N$ and $\lambda \in W_n$, the mapping $x_i \mapsto \M(x,\lambda)$ is nondecreasing for all $i \in \{1,\dots,n\}$.

Assuming that $I$ is a subinterval of $\R_+$, we call a weighted mean $\M$ \emph{homogeneous}, if for all $t>0$, $n\in\N$ and 
$(x,\lambda)\in \big(I\cap\frac1t I\big)^n\times W_n$,
\Eq{*}{
  \M(tx,\lambda)=t\M(x,\lambda).
}
For a given subinterval $I$ of $\R_+$ with $\inf I=0$ and a weighted mean $\M$ on $I$, we define two
functions $\M_\#,\M^\# \colon \bigcup_{n=1}^{\infty} \R_+^n \times W_n \to \R_+$ by
\Eq{*}{
  \M_\#(x,\lambda):=\liminf_{t\to 0^+}\tfrac1t\M(tx,\lambda) \qquad\mbox{and}\qquad \M^\#(x,\lambda):=\limsup_{t\to 0^+}\tfrac1t\M(tx,\lambda).
}
We call $\M_\#$ and $\M^\#$ the \emph{lower and upper homogenization} of the weighted mean $\M$, respectively. It is obvious that $\M_\#$ and $\M^\#$ are homogeneous weighted means on $\R_+$, furthermore, we have the inequality $\M_\#\leq\M^\#$ on $\bigcup_{n=1}^{\infty}\R_+^n \times W_n$. It is also easy to see that if $\M$ is symmetric (monotone), then also $\M_\#$ and $\M^\#$ are symmetric (monotone). Moreover, in the case when $\M$ is concave, we have a few additional properties.

\begin{lem}[\cite{PalPas19b}, Theorem 2.1]\label{lem:AMH2.1}
 Let $I$ be a subinterval of $\R^+$ with $\inf I = 0$ and $\M$ be a
Jensen concave weighted mean on $I$. Then $\M_\# = \M^\#$ and these means
are also Jensen concave. In addition, $\M \le \M_\# = \M^\#$ on the domain of $\M$.
\end{lem}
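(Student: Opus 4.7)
The plan is to reduce the statement to a one-variable concavity argument applied to the curve $t\mapsto\M(tx,\lambda)$. Fix $(x,\lambda)\in(0,\infty)^n\times W_n$ (the case of coordinates $x_i=0$ will be addressed at the end) and set $\varphi(t):=\M(tx,\lambda)$ for $t\in(0,T)$, where $T>0$ is small enough that $tx\in I^n$; this is possible because $\inf I=0$.

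First I would show that $\varphi$ is concave on $(0,T)$. Jensen concavity of $\M$ gives
\[
\varphi\!\left(\tfrac{t_1+t_2}{2}\right)=\M\!\left(\tfrac{t_1x+t_2x}{2},\lambda\right)\ge\tfrac12\big(\M(t_1x,\lambda)+\M(t_2x,\lambda)\big)=\tfrac12\big(\varphi(t_1)+\varphi(t_2)\big),
\]
so $\varphi$ is midpoint concave. The mean value property forces $t\min_ix_i\le\varphi(t)\le t\max_ix_i$, whence $\varphi$ is locally bounded and Bernstein--Doetsch upgrades midpoint concavity to concavity on $(0,T)$. The same two-sided bound also shows $\varphi(t)\to 0$ as $t\to 0^+$.

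Next, extending $\varphi$ by $\varphi(0):=0$ yields a concave function on $[0,T)$, and I would invoke the standard fact that the secant slope $\varphi(t)/t$ is then nonincreasing in $t$: for $0<s<t<T$, writing $s=\tfrac{s}{t}\cdot t+(1-\tfrac{s}{t})\cdot 0$ and using concavity gives $\varphi(s)\ge\tfrac{s}{t}\varphi(t)$, i.e.\ $\varphi(s)/s\ge\varphi(t)/t$. Consequently $\lim_{t\to 0^+}\varphi(t)/t$ exists in $[\varphi(1),\max_ix_i]$; this limit is simultaneously $\M_\#(x,\lambda)$ and $\M^\#(x,\lambda)$, so $\M_\#=\M^\#$, and it is no smaller than $\varphi(1)=\M(x,\lambda)$, which gives $\M\le\M_\#$.

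Finally, Jensen concavity of the common value $\M_\#=\M^\#$ follows by passage to the limit: for $x,y\in(0,\infty)^n$ and $\lambda\in W_n$,
\[
\M_\#\!\left(\tfrac{x+y}{2},\lambda\right)=\lim_{t\to 0^+}\tfrac1t\M\!\left(\tfrac{tx+ty}{2},\lambda\right)\ge\lim_{t\to 0^+}\tfrac{1}{2t}\big(\M(tx,\lambda)+\M(ty,\lambda)\big)=\tfrac12\big(\M_\#(x,\lambda)+\M_\#(y,\lambda)\big),
\]
using that both limits on the right exist by the previous step. I expect the main bookkeeping obstacle to be the case where some $x_i=0$ while $0\notin I$, so $tx\notin I^n$ and $\varphi$ is undefined on a ray; this should be handled either by perturbing zero coordinates to small positive values and invoking monotonicity/continuity to take the limit, or by directly interpreting the defining $\liminf$ and $\limsup$ on the boundary of $\R_+^n$ through values of $\M$ at approximating interior points.
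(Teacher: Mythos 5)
The paper does not prove Lemma~\ref{lem:AMH2.1}: it is quoted from \cite{PalPas19b} (Theorem~2.1), so there is no internal proof to compare your argument against. Judged on its own, your proof is correct and follows the natural route behind the homogenization results of \cite{PalPas19b}: Jensen concavity together with the mean value bounds makes $\varphi(t)=\M(tx,\lambda)$ (midpoint, hence by Bernstein--Doetsch genuinely) concave with $\varphi(t)\to0$ as $t\to0^+$, so $t\mapsto\varphi(t)/t$ is nonincreasing; therefore the limit defining $\M_\#=\M^\#$ exists and is finite (at most $\max_i x_i$), it dominates $\varphi(1)=\M(x,\lambda)$ whenever $x$ lies in the domain of $\M$, and Jensen concavity of the homogenization follows by letting $t\to0^+$ in \eq{E:JF2}, since all the relevant limits exist. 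Two small points. First, in this paper $\R_+$ denotes the positive reals, so the deferred case $x_i=0$ never occurs; this is fortunate, because the fix you sketch for it invokes monotonicity, which is not among the hypotheses of the lemma. Second, when you compare the limit with $\varphi(1)$ you implicitly need $t=1$ in the domain of $\varphi$: for $x\in I^n$ this is automatic, since $\inf I=0$ and $I$ is an interval give $(0,x_i]\subseteq I$, but the natural domain of $\varphi$ may be $(0,1]$ with $1$ a right endpoint, where Bernstein--Doetsch yields concavity only on the open interval $(0,1)$; one extra line (for instance, midpoint concavity at $t=1$ combined with continuity of $\varphi$ on $(0,1)$ gives $\varphi(1)\le\lim_{s\to1^-}\varphi(s)$, hence $\lim_{t\to0^+}\varphi(t)/t\ge\varphi(1)$) closes this routine gap.
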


\medbreak

In what follows, we recall several particular classes of weighted means. 
For a parameter $p\in\R$, define the \emph{weighted power mean} $\P_p\colon \bigcup_{n=1}^{\infty} \R_+^n \times W_n \to \R_+$ by
\Eq{*}{
\P_p (x,\lambda):= 
\begin{cases} 
\left(\dfrac{\lambda_1x_1^p+\cdots+\lambda_nx_n^p}{\lambda_1+\cdots+\lambda_n} \right)^{1/p} &\quad \text{ if } p \ne 0, \\[4mm]                                                         
\left(x_1^{\lambda_1}\cdots x_n^{\lambda_n} \right)^{1/(\lambda_1+\cdots+\lambda_n)} &\quad \text{ if } p = 0.
\end{cases}
}
In a more general setting, we can define weighted quasiarithmetic means in the spirit of \cite{HarLitPol34}. Given an interval $I$ and a continuous strictly monotone function $f \colon I \to \R$, the \emph{weighted quasiarithmetic mean} 
$\QA{f} \colon \bigcup_{n=1}^{\infty} I^n \times W_n \to I$ is defined by
\Eq{QA}{
\QA{f}(x,\lambda):= f^{-1} \left( \frac{\lambda_1 f(x_1)+\cdots+\lambda_nf(x_n)}{\lambda_1+\cdots+\lambda_n} \right).
}
Another important generalization of power means was introduced in the paper \cite{Gin38}. For two real parameters $p,q$, the Gini mean $\G_{p,q}\colon \bigcup_{n=1}^{\infty} \R_+^n \times W_n \to \R_+$ is defined by
\Eq{*}{
\G_{p,q} (x,\lambda):= 
\begin{cases} 
\left(\dfrac{\lambda_1x_1^p+\cdots+\lambda_nx_n^p}{\lambda_1x_1^q+\cdots+\lambda_nx_n^q} \right)^{\frac{1}{p-q}} &\quad \text{ if } p \ne q, \\[4mm]                                                         
\exp\left(\dfrac{\lambda_1x_1^p\log x_1+\cdots+\lambda_nx_n^p\log x_n}{\lambda_1x_1^p+\cdots+\lambda_nx_n^p} \right) &\quad \text{ if } p = q.
\end{cases}
}

In a sequence papers, further generalizations were obtained: \emph{Bajraktarevi\'c means} \cite{Baj58}, \emph{deviation (or Dar\'oczy) means} \cite{Dar71b} and \emph{quasideviation means} \cite{Pal82a}. For more details, we just refer the reader to a series of papers by Losonczi \cite{Los70a,Los71a,Los71b,Los71c,Los73a,Los77} (for Bajraktarevi\'c means), Dar\'oczy \cite{Dar71b,Dar72b}, Dar\'oczy--Losonczi \cite{DarLos70}, Dar\'oczy--P\'ales \cite{DarPal82,DarPal83} (for deviation means), P\'ales \cite{Pal82a,Pal83b,Pal84a,Pal85a,Pal88a,Pal88d,Pal88e} (for deviation and quasideviation means) and P\'ales--Pasteczka \cite{PalPas19b} (for semideviation means).

In what follows, we recall the notions of a quasideviation and the related  weighted quasideviation mean (cf. \cite{Pal82a}, \cite{Pal89b} and \cite{PalPas19b}).

\begin{defin} A function $E\colon I\times I\to\R$ is said to be a \emph{quasideviation} if
\begin{enumerate}[(a)]
 \item for all elements $x,y\in I$, the sign of $E(x,y)$ coincides with that of $x-y$,
 \item for all $x\in I$, the map $y\mapsto E(x,y)$ is continuous and,
 \item for all $x<y$ in $I$, the mapping $(x,y)\ni t\mapsto \frac{E(y,t)}{E(x,t)}$ is strictly increasing.
\end{enumerate}
By the results of the paper \cite{Pal82a},
for all $n\in\N$ and $(x,\lambda)\in I^n\times W_n$, the equation
\Eq{e}{
  \lambda_1 E(x_1,y)+\cdots+\lambda_n E(x_n,y)=0
}
has a unique solution $y$, which will be called the \emph{$E$-quasideviation mean} of $(x,\lambda)$ and denoted by $\D_E(x,\lambda)$.

One can easily notice that power means, quasiarithmetic means, Gini means are quasideviation means. 

We say that a quasideviation $E\colon I\times I\to\R$ is \emph{normalizable} if, for all $x\in I$, the function $y\mapsto E(x,y)$ is differentiable at $x$ and the mapping $x\mapsto\partial_2E(x,x)$ is strictly negative and continuous on $I$. The normalization $E^*\colon I\times I\to\R$ of $E$ is defined by
\Eq{*}{
  E^*(x,y):=\frac{E(x,y)}{-\partial_2E(y,y)} \qquad(x,y\in I).
}
The quasideviation means generated by $E$ and $E^*$ are identical. In \cite[Lemma 5.1]{PalPas19b} we proved that, for a normalized quasideviation $E$, the partial derivative $\partial_2E$ is identically equal to $-1$ on the diagonal of $I\times I$, hence $E^*$ is also a normalizable quasideviation and $(E^*)^*=E^*$ holds.
\end{defin}

The following two results of the papers \cite{PalPas18a} and \cite{PalPas19b}
are instrumental for us.

\begin{lem}[\!\!\cite{PalPas18a}, Theorem~2.3]\label{lem:MIA2.3}
Let $f\colon\R_+\to\R$ be concave such that $\sign(f(x))=\sign(x-1)$ for all $x\in\R_+$. Then the function $E\colon\R_+^2\to\R$ defined by $E(x,y):=f\big(\frac xy\big)$ is a quasideviation and the weighted quasideviation mean $\E_f:=\D_E$ is homogeneous, continuous, nondecreasing and concave.
\end{lem}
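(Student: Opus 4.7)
The plan is to verify the three axioms of a quasideviation for $E(x,y):=f(x/y)$, and then derive the four claimed properties of $\E_f:=\D_E$ from its defining equation \eq{e}. A key preliminary: the sign normalization combined with concavity of $f$ forces $f$ to be strictly increasing on $\R_+$. Indeed, if $f(a)\ge f(b)$ for some $a<b$, then concavity propagates the nonpositive chord slope, and examining the three cases ($a<b\le 1$, $a<1<b$, $1\le a<b$) each leads to a contradiction with the sign constraint (for instance in the last case, $f$ would be eventually driven to $-\infty$, violating $f>0$ on $(1,\infty)$). This strict monotonicity of $f$ will be used repeatedly.

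Axiom (a) is immediate from $\sign(f(u))=\sign(u-1)$. Axiom (b) follows because concave functions on open intervals are continuous. The substantive step is axiom (c): for $x<y$ in $\R_+$ and $t\in(x,y)$, the ratio $t\mapsto f(y/t)/f(x/t)$ must be strictly increasing (numerator positive, denominator negative). After the substitution $v:=x/t$ with $\rho:=y/x>1$, this reduces to showing that $v\mapsto f(\rho v)/(-f(v))$ is strictly increasing on $(1/\rho,1)$. Differentiating logarithmically gives
\Eq{*}{
\frac{\rho f'(\rho v)}{f(\rho v)}+\frac{f'(v)}{-f(v)},
}
a sum of two strictly positive quantities (since $f'>0$ on $\R_+$: strict monotonicity combined with $f'$ nonincreasing, from concavity, rules out any zero of $f'$). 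For non-differentiable $f$ one uses right-derivatives, or equivalently the standard fact that $u\mapsto f(u)/(u-1)$ is decreasing on $\R_+\setminus\{1\}$.

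Once $E$ is a quasideviation, $\E_f(x,\lambda)$ is the unique $y$ satisfying $\sum_{i=1}^n\lambda_i f(x_i/y)=0$, and the four required properties drop out. Homogeneity: the substitutions $x_i\mapsto tx_i$ and $y\mapsto ty$ leave the defining equation invariant. Monotonicity and continuity: standard implicit-function arguments, using that $y\mapsto\sum\lambda_i f(x_i/y)$ is strictly decreasing (again since $f$ is strictly increasing). Concavity is the cleanest part: setting $y_1:=\E_f(x,\lambda)$ and $y_2:=\E_f(x',\lambda)$, the concavity of $f$ yields, for each $i$,
\Eq{*}{
f\Bigl(\frac{x_i+x'_i}{y_1+y_2}\Bigr)=f\Bigl(\tfrac{y_1}{y_1+y_2}\cdot\tfrac{x_i}{y_1}+\tfrac{y_2}{y_1+y_2}\cdot\tfrac{x'_i}{y_2}\Bigr)\ge\frac{y_1 f(x_i/y_1)+y_2 f(x'_i/y_2)}{y_1+y_2};
}
summing against the weights $\lambda_i$ makes the right-hand side vanish by the defining equations, which, by strict monotonicity in $y$, forces $\E_f\bigl((x+x')/2,\lambda\bigr)\ge(y_1+y_2)/2$, i.e., Jensen concavity.

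The main technical obstacle is axiom~(c): one must extract enough quantitative information from concavity of $f$ (together with the forced strict monotonicity and the sign normalization) to control the ratio $f(y/t)/f(x/t)$. Everything after (c) is essentially algebraic manipulation of the defining equation of $\D_E$ combined with one clever use of concavity for the Jensen inequality.
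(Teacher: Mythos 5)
Your overall strategy (verify the quasideviation axioms for $E(x,y)=f(x/y)$, then read off homogeneity, monotonicity, continuity and Jensen concavity from the defining equation $\sum_i\lambda_i f(x_i/y)=0$, with Bernstein--Doetsch upgrading Jensen concavity to concavity) is the standard route --- the paper itself only cites \cite{PalPas18a} for this lemma --- and your concavity computation with the weights $\tfrac{y_1}{y_1+y_2}$, $\tfrac{y_2}{y_1+y_2}$ is exactly the right trick. However, your preliminary claim is false: concavity together with $\sign(f(x))=\sign(x-1)$ does \emph{not} force $f$ to be strictly increasing on $\R_+$; it only forces $f$ to be nondecreasing on $\R_+$ and strictly increasing on $(0,1)$, which is precisely what the authors record in the proof of Lemma~\ref{lem:F}. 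For instance $f(x)=\min(x-1,1)$ is concave, has the sign property, and is constant on $[2,\infty)$; in your case $1\le a<b$ with $f(a)=f(b)$ the chord slope is zero and nothing is ``driven to $-\infty$'', so your case analysis cannot produce a contradiction there. This false claim is then invoked at every place where you need strictness: in axiom (c) the summand $\rho f'(\rho v)/f(\rho v)$ may vanish (when $f$ is locally constant on $(1,\infty)$), the map $y\mapsto\sum_i\lambda_i f(x_i/y)$ is in general only nonincreasing, and the last step of the concavity argument appeals to a strict monotonicity in $y$ that need not hold.

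Each use is repairable with what is actually true, so the damage is local rather than fatal. For axiom (c) you need no derivatives at all: with $v=x/t$ and $\rho=y/x>1$, the numerator $f(\rho v)$ is positive and nondecreasing in $v$ while the denominator $-f(v)$ is positive and strictly decreasing in $v$ on $(1/\rho,1)$ (strict increase of $f$ on $(0,1)$), so the quotient is strictly increasing; equivalently, in your logarithmic derivative only the second summand needs to be strictly positive, and it is, since $f'>0$ on $(0,1)$ wherever it exists. For monotonicity, continuity, and the conclusion of the concavity proof, replace ``strictly decreasing in $y$'' by ``nonincreasing with a unique zero'' --- uniqueness being part of the quasideviation framework once (a)--(c) are checked, or seen directly from the sign of $\phi(y)=\sum_i\lambda_i f(x_i/y)$ for $y<\min_i x_i$ and $y>\max_i x_i$ --- which suffices to deduce $\E_f(x+x',\lambda)\ge y_1+y_2$ from $\phi(y_1+y_2)\ge0$. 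With these corrections your proof goes through and coincides in substance with the argument of \cite{PalPas18a}.
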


\begin{lem}[\!\!\cite{PalPas19b}, Theorem 6.3]\label{lem:AMH6.3}
Let $E : I \times I \to \R$ be a normalizable quasideviation such that $E^*$ is concave. Assume that $\lim_{t \to 0^+}E^*(xt, t) = 0$ for all $x \in \R_+$. Then, for all $x \in \R_+$, the limit 
\Eq{hE}{
h_E(x) := \lim_{t \to 0^+} t^{-1} E^*(xt,t)
}
exists, $\sign(h_E(x))=\sign(x-1)$, and the function $h_E : \R_+ \to \R$ so defined is concave and nondecreasing on $\R_+$, and is strictly increasing on $(0, 1)$. 
Furthermore, the weighted quasideviation mean $\D_E$ is Jensen concave, monotone, and
\Eq{*}{
\E_{h_E}=(\D_E)_\#=(\D_E)^\#.
}
\end{lem}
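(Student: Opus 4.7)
The plan is to build up the conclusion in three stages: first construct $h_E$ and verify its pointwise properties, then deduce Jensen concavity and monotonicity of $\D_E$, and finally identify the (common) homogenization of $\D_E$ with $\E_{h_E}$.

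For fixed $x>0$, consider $g_x(s):=E^*(xs,s)$ on a right neighborhood of $0$, extended by $g_x(0)=0$ via the hypothesis. Joint concavity of $E^*$ composed with the linear map $s\mapsto(xs,s)$ makes $g_x$ concave, so $s\mapsto g_x(s)/s$ is nonincreasing on the positive part, and $h_E(x):=\lim_{s\to 0^+}g_x(s)/s=\sup_{s>0}g_x(s)/s$ exists in $(-\infty,+\infty]$. Finiteness I would obtain from the normalization $\partial_2E^*(y,y)=-1$: the tangent to the concave function $v\mapsto E^*(xs,v)$ at $v=xs$ gives $E^*(xs,s)\le xs-s=(x-1)s$, so $h_E(x)\le x-1<\infty$. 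Combined with $\sign(E^*(xs,s))=\sign(x-1)$ passed to the limit, this proves $\sign(h_E(x))=\sign(x-1)$ and the strict bound $h_E(x)\le x-1<0$ on $(0,1)$. Concavity and monotonicity of $h_E$ transfer from the concavity and nondecreasingness of $\varphi_s(x):=s^{-1}E^*(xs,s)$ (inherited pointwise from $E^*$) via pointwise passage to the limit $s\to 0^+$. Strict increase on $(0,1)$ follows from concavity together with $h_E<0=h_E(1)$: any flat subinterval $[a,b]\subset(0,1)$ would force $h_E$ to be nonincreasing on $[b,1]$ by the nonincreasing-slope property of concave functions, contradicting $h_E(b)<0=h_E(1)$.

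Jensen concavity of $\D_E$ is then obtained by the standard implicit-equation argument: for $x,y\in I^n$, $\lambda\in W_n$, set $u:=\D_E(x,\lambda)$, $v:=\D_E(y,\lambda)$, $w:=(u+v)/2$, and apply joint concavity of $E^*$ to get
\Eq{*}{
\sum_{i=1}^n\lambda_i E^*\Big(\tfrac{x_i+y_i}{2},w\Big)\ge\tfrac12\sum_{i=1}^n\lambda_i E^*(x_i,u)+\tfrac12\sum_{i=1}^n\lambda_i E^*(y_i,v)=0;
}
the monotonic decrease of $E^*$ in the second variable (a consequence of the quasideviation axioms) then forces $\D_E\big(\tfrac{x+y}{2},\lambda\big)\ge w$. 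Monotonicity of $\D_E$ is argued analogously. Lemma~\ref{lem:AMH2.1} now gives $(\D_E)_\#=(\D_E)^\#$ for free. To identify this with $\E_{h_E}$, write $y_t:=\D_E(tx,\lambda)$, $m_t:=y_t/t$, divide the defining identity $\sum_i\lambda_i E^*(tx_i,tm_t)=0$ by $t$, and pass to the limit using the scaling $t^{-1}E^*(ta,t\mu)=\mu\cdot(t\mu)^{-1}E^*\big(\tfrac{a}{\mu}\cdot t\mu,t\mu\big)\to\mu\, h_E(a/\mu)$; this yields $\sum_i\lambda_i m_* h_E(x_i/m_*)=0$, and dividing by $m_*$ identifies $m_*=\E_{h_E}(x,\lambda)$.

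The main obstacle is the limit interchange in this last step: one must show that $m_t$ converges (not merely along a subsequence) and that $t^{-1}E^*(tx_i,y_t)\to m_*h_E(x_i/m_*)$ despite $y_t$ deviating from $tm_*$. I would first use the mean value property of $\D_E$ to sandwich $m_t=\D_E(tx,\lambda)/t\in[\min_i x_i,\max_i x_i]$, securing compactness and positivity; a Dini-type argument, exploiting the monotone convergence $\varphi_s\nearrow h_E$ of continuous functions to the continuous limit $h_E$, then supplies uniform convergence in $x$ on compacta and justifies the interchange; finally, uniqueness of the solution of $\sum_i\lambda_i h_E(x_i/\mu)=0$ (from the sign and strict-monotonicity properties of $h_E$) forces every subsequential limit of $m_t$ to equal $\E_{h_E}(x,\lambda)$, proving $m_t\to\E_{h_E}(x,\lambda)$.
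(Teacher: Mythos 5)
This lemma is not proved in the paper at all: it is quoted verbatim from \cite{PalPas19b} (Theorem 6.3), so there is no in-paper argument to compare you with, and your proposal has to stand on its own. Most of it does. The construction of $h_E$ via the concavity of $s\mapsto E^*(xs,s)$ extended by $0$ at $s=0$, the tangent estimate $E^*(xs,s)\le (x-1)s$ coming from $\partial_2E^*(xs,xs)=-1$ (which gives both finiteness and the strict sign on $(0,1)$), the Jensen-concavity computation for $\D_E$, and the identification of $(\D_E)_\#$ with $\E_{h_E}$ through the mean value property, Dini's theorem for the monotone convergence $\varphi_s\nearrow h_E$ on compacta, and uniqueness of the zero of $\mu\mapsto\sum_i\lambda_i h_E(x_i/\mu)$ are all workable. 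Two of your justifications are misstated, though the conclusions survive: a quasideviation need not be decreasing in its second variable, so your concavity step should instead invoke the fact that $y\mapsto\sum_i\lambda_iE^*(z_i,y)$ is positive before and negative after its unique zero (unique solvability plus the intermediate value theorem); and the nondecreasingness of $h_E$ should be deduced from the concavity and sign property of $h_E$ itself (a concave function on $\R_+$ that decreases somewhere eventually becomes negative), not from an alleged monotonicity of $x\mapsto E^*(xs,s)$, which is not available.

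The genuine gap is the sentence ``Monotonicity of $\D_E$ is argued analogously.'' It is not analogous: to show that the mean does not drop when one entry is increased you need $E^*$ to be nondecreasing in its \emph{first} variable, and this is neither a quasideviation axiom nor a consequence of joint concavity when $\sup I<\infty$. Concretely, on $I=(0,1)$ the function $E^*(x,y)=\min\{x-y,\,1.1-x\}$ is a jointly concave, normalized quasideviation with $\lim_{t\to0^+}E^*(xt,t)=0$ for every $x\in\R_+$, yet it is strictly decreasing in $x$ on a region where $x>y$; for the entries $(0.9,0.5)$ with weights $(\varepsilon,1)$ the mean equals $0.5+0.2\varepsilon$, and it drops to $0.5+0.15\varepsilon$ when $0.9$ is raised to $0.95$. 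So monotonicity of $\D_E$ cannot be extracted from the hypotheses as you (and the lemma as transcribed here) state them by mimicking the concavity computation; the original source's framework supplies the missing first-variable monotonicity, and in your write-up this step needs an explicit argument — for instance, it does follow from concavity and the sign property when $\sup I=\infty$, since a concave slice $x\mapsto E^*(x,y)$ that decreases somewhere would eventually violate positivity on $(y,\infty)$, after which your perturbation argument goes through.
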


\section{Hardy type inequalities for general weighted means}

We recall several definitions and results of the papers \cite{PalPas19a} and \cite{PalPas19b}. Throughout the rest of the paper, let $I$ be an interval with $\inf I=0$.

\begin{defin}[Weighted Hardy property]
For a weighted mean $\M$ on $I$ and a weight sequence $\lambda \in W_0$, let $C$ be the smallest extended real number such that 
\Eq{*}{
\sum_{n=1}^{\infty} \lambda_n \cdot \Mm_{i=1}^n\big(x_i,\lambda_i\big) \le C \cdot \sum_{n=1}^{\infty} \lambda_nx_n \qquad\text{ for all sequences } (x_n) \text{ in $I$}.
}
We call $C$ to be the \emph{$\lambda$-weighted Hardy constant of $\M$} or the \emph{$\lambda$-Hardy constant of $\M$} and denote it by $\Hc[\lambda]\M$. Whenever this constant is finite, then $\M$ is called a \emph{$\lambda$-weighted Hardy mean} or simply a \emph{$\lambda$-Hardy mean}.
\end{defin}

Extending some previous results by Elliott \cite{Ell26} and Copson \cite{Cop27}, we have obtained in \cite{PalPas19a} that, in a large class of weighted means, the Hardy constant corresponding to the weight sequence $\vone:=(1,1,\dots)$ is the maximal one.
\begin{thm}
\label{thm:mu1}
For every symmetric and monotone weighted mean $\M$ on $I$, we have
\Eq{*}{
\Hc[\vone]\M=\sup_{\lambda \in W_0} \Hc[\lambda]\M.
}
\end{thm}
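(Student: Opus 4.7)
The $\le$ direction is immediate since $\vone \in W_0$; the content of the theorem is the reverse inequality $\Hc[\lambda]\M \le \Hc[\vone]\M$ for a fixed $\lambda \in W_0$. Writing $C := \Hc[\vone]\M$ (assumed finite, otherwise nothing to prove), the aim is to prove
\[
 \sum_n \lambda_n \M\bigl((x_1, \ldots, x_n), (\lambda_1, \ldots, \lambda_n)\bigr) \le C \sum_n \lambda_n x_n
\]
for every sequence $(x_n) \subset I$.

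My natural first attempt is the \emph{symmetric blow-up}. Using nullhomogeneity of $\M$ in the weights, density of $\Q_+ \subset \R_+$, truncation, and continuity, one can reduce to integer weights $\lambda = (k_1, \ldots, k_N, 0, 0, \ldots)$ with $k_1 > 0$. Setting $K_n := k_1 + \cdots + k_n$ and forming the unweighted sequence $y_m := x_n$ for $m \in (K_{n-1}, K_n]$, padded to an infinite sequence in $I$ by terms tending to $\inf I = 0$, symmetry together with the reduction principle for equal arguments gives
\[
 \M\bigl((y_1, \ldots, y_{K_n}), \vone\bigr) = \M\bigl((x_1, \ldots, x_n), (k_1, \ldots, k_n)\bigr).
\]
Thus the target left-hand side becomes $\sum_n k_n \M((y_1, \ldots, y_{K_n}), \vone)$, while $\sum_m y_m$ equals $\sum_n \lambda_n x_n$ modulo the vanishing tail. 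Applying the $\vone$-Hardy property to $(y_m)$ yields $\sum_m \M((y_1, \ldots, y_m), \vone) \le C \sum_m y_m$.

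The expected main obstacle is that this is strictly weaker than what is required: we would additionally need
\[
 \sum_n k_n \M\bigl((y_1, \ldots, y_{K_n}), \vone\bigr) \le \sum_m \M\bigl((y_1, \ldots, y_m), \vone\bigr),
\]
but this fails pointwise. Indeed, decomposing into blocks and using symmetry once more, the needed inequality reduces to $k_n \M((x_1,\ldots,x_n),(k_1,\ldots,k_n)) \le \sum_{j=1}^{k_n} \M((x_1,\ldots,x_n),(k_1,\ldots,k_{n-1},j))$; yet the map $j \mapsto \M((x_1,\ldots,x_n),(k_1,\ldots,k_{n-1},j))$ is monotone, moving from the previous partial mean toward $x_n$, and whenever $x_n$ exceeds that prior mean this map is nondecreasing so the block-wise inequality reverses. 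The plain expansion argument therefore cannot conclude, and the proof must be completed by a \emph{global} accounting across blocks—for example an Abel-type summation on the original weighted sum, or a rearrangement of $(y_m)$ that compensates the ``deficit'' of blocks with $x_n$ above the prior mean by the ``surplus'' of blocks with $x_n$ below it, both arguments exploiting the full strength of symmetry and monotonicity of $\M$. This global redistribution, generalizing the Elliott--Copson argument from power means to the abstract class of symmetric monotone weighted means, is the technical heart of the proof.
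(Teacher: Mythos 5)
Your proposal does not reach a proof; it stops exactly where the actual work begins. (Note also that the paper itself does not prove Theorem~\ref{thm:mu1} — it is quoted from the earlier paper \cite{PalPas19a} — so the only thing to judge is your argument on its own terms.) The trivial direction and the blow-up identity $\M\bigl((y_1,\dots,y_{K_n}),\vone\bigr)=\M\bigl((x_1,\dots,x_n),(k_1,\dots,k_n)\bigr)$, via symmetry together with the reduction and elimination principles, are fine. But, as you yourself observe, the $\vone$-Hardy inequality for the blown-up sequence only gives $\sum_m \M\bigl((y_1,\dots,y_m),\vone\bigr)\le C\sum_m y_m$, and the blockwise comparison $k_n\,\M\bigl((x_1,\dots,x_n),(k_1,\dots,k_n)\bigr)\le\sum_{j=1}^{k_n}\M\bigl((x_1,\dots,x_n),(k_1,\dots,k_{n-1},j)\bigr)$ genuinely fails whenever $x_n$ exceeds the preceding partial mean. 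At that point you invoke an unspecified ``global accounting'' (an Abel-type summation, or a rearrangement trading the deficit of some blocks against the surplus of others) and call it the technical heart of the proof. That global argument \emph{is} the theorem — it is the abstract Elliott--Copson step — and you have not supplied it, nor indicated why symmetry and monotonicity (which acts on the entries, not on the weights) would make such a redistribution work for an arbitrary weighted mean. A sketch that ends with ``the proof must be completed by\ldots'' is a statement of the problem, not a solution.

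There is also a secondary unjustified step: the reduction from an arbitrary $\lambda\in W_0$ to integer weights invokes ``continuity,'' but continuity in the weights is not part of the definition of a symmetric, monotone weighted mean — note that Proposition~\ref{prop:BJ} has to assume continuity in the weights as a separate hypothesis — and nullhomogeneity only permits rescaling all weights simultaneously. Similarly, the asserted monotonicity of $j\mapsto\M\bigl((x_1,\dots,x_n),(k_1,\dots,k_{n-1},j)\bigr)$ in the weight $j$ is not available in this generality (though you only use it in your negative discussion). So both the preliminary rational-weight reduction and, above all, the key inequality $\Hc[\lambda]\M\le\Hc[\vone]\M$ remain unproved.
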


The following lemma from \cite{PalPas19a} will be used.
\begin{lem} 
\label{lem:5}
Let $\M$ be a weighted mean on $I$ and $\lambda\in W_0$. Then, for all $n\in \N$ and $x \in I^n$,
\Eq{FinWH}{
\sum_{i=1}^n \lambda_i \cdot \Mm_{j=1}^i \big(x_j,\lambda_j\big)\le \Hc[\lambda]\M \sum_{i=1}^n \lambda_ix_i.
}
\end{lem}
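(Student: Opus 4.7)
The plan is to reduce the finite inequality \eq{FinWH} to the (possibly vacuous) infinite Hardy inequality built into the definition of $\Hc[\lambda]\M$. My approach is to embed the finite data $x=(x_1,\dots,x_n)\in I^n$ into an infinite sequence $\hat x\in I^{\N}$ so that (i) the prefix-means indexed by $i\le n$ are unaffected, and (ii) the extra mass $\sum_{i>n}\lambda_i\hat x_i$ appearing on the right-hand side can be made arbitrarily small. The main (and really only) obstacle is the construction of such a tail for $\hat x$; this is where the standing hypothesis $\inf I=0$ enters decisively.

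After dispatching the trivial case $\Hc[\lambda]\M=\infty$, I fix $\varepsilon>0$ and define $\hat x$ by $\hat x_i:=x_i$ for $i\le n$, and for $i>n$ I choose $\hat x_i\in I$ with $\lambda_i\hat x_i\le\varepsilon\cdot 2^{-(i-n)}$ when $\lambda_i>0$, and any $\hat x_i\in I$ when $\lambda_i=0$. Such a choice is possible precisely because $\inf I=0$, and it guarantees $\hat x\in I^{\N}$ together with the control
\Eq{*}{
\sum_{i=1}^{\infty}\lambda_i\hat x_i\le\sum_{i=1}^{n}\lambda_ix_i+\varepsilon.
}

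Finally I apply the defining infinite Hardy inequality of $\Hc[\lambda]\M$ to $\hat x$. Since $I\subset[0,\infty)$, every term $\lambda_i\,\Mm_{j=1}^i(\hat x_j,\lambda_j)$ on the left is nonnegative, so the partial sum up to index $n$ is bounded above by the full infinite sum; and because $\hat x_j=x_j$ for every $j\le i\le n$, the first $n$ mean-terms coincide with those appearing in \eq{FinWH}. Assembling these observations yields
\Eq{*}{
\sum_{i=1}^{n}\lambda_i\,\Mm_{j=1}^i(x_j,\lambda_j)
 \le \sum_{i=1}^{\infty}\lambda_i\,\Mm_{j=1}^i(\hat x_j,\lambda_j)
 \le \Hc[\lambda]\M\,\Big(\sum_{i=1}^{n}\lambda_ix_i+\varepsilon\Big),
}
and letting $\varepsilon\to 0^+$ finishes the argument. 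No structural property of $\M$ beyond the definition of its Hardy constant, and no property of $I$ beyond $\inf I=0$, is used in this reduction.
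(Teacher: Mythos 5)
Your proof is correct: padding the finite vector with a tail of points of $I$ chosen (via $\inf I=0$) so that the extra weighted mass is at most $\varepsilon$, invoking the defining infinite inequality for $\Hc[\lambda]\M$, discarding the nonnegative terms beyond index $n$, and letting $\varepsilon\to0^+$ is exactly the standard reduction. The paper itself states Lemma~\ref{lem:5} without proof (it is imported from \cite{PalPas19a}), and your tail-padding argument is the natural one used there, so there is nothing substantive to contrast.
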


Based on this lemma and Lemma~\ref{lem:AMH2.1}, we can compare the $\lambda$-Hardy constant of the weighted mean $\M$ and its lower homogenization $\M_\#$.

\begin{thm} 
\label{thm:M*}
Let $\M$ be a weighted mean on $I$. Then, for all $\lambda\in W_0$,
\Eq{M-M*}{
  \Hc[\lambda]{\M_\#}\leq \Hc[\lambda]\M.
}
If, in addition, $\M$ is Jensen concave, then \eq{M-M*} holds with equality.
\end{thm}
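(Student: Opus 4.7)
The plan is to combine Lemma~\ref{lem:5} with the scaling limit defining $\M_\#$. Fix $\lambda \in W_0$, $n \in \N$, and $x = (x_1, \dots, x_n) \in I^n$. Because $\inf I = 0$, an interval of the form $(0, \varepsilon)$ lies inside $I$, so $tx \in I^n$ for every sufficiently small $t > 0$. Applying Lemma~\ref{lem:5} with $tx$ in place of $x$ and dividing both sides by $t$ gives
\begin{equation*}
\sum_{i=1}^{n} \lambda_i \cdot \tfrac{1}{t}\, \Mm_{j=1}^{i}\big(tx_j, \lambda_j\big) \le \Hc[\lambda]{\M} \sum_{i=1}^{n} \lambda_i x_i.
\end{equation*}
Taking $\liminf_{t \to 0^+}$ on the left-hand side and using the superadditivity of $\liminf$ for finite sums (valid here since each summand is nonnegative), the left side is bounded below by $\sum_{i=1}^{n} \lambda_i \cdot \Mst_{j=1}^{i}(x_j, \lambda_j)$. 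Hence
\begin{equation*}
\sum_{i=1}^{n} \lambda_i \cdot \Mst_{j=1}^{i}(x_j, \lambda_j) \le \Hc[\lambda]{\M} \sum_{i=1}^{n} \lambda_i x_i,
\end{equation*}
and letting $n \to \infty$ (monotone convergence) produces the full weighted Hardy inequality for $\M_\#$ with constant $\Hc[\lambda]{\M}$. By the minimality in the definition of the Hardy constant we conclude $\Hc[\lambda]{\M_\#} \le \Hc[\lambda]{\M}$.

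For the equality statement, suppose $\M$ is Jensen concave. Lemma~\ref{lem:AMH2.1} supplies the pointwise inequality $\M \le \M_\# = \M^\#$ on the domain of $\M$, so for every sequence $(x_n)$ in $I$,
\begin{equation*}
\sum_{n=1}^{\infty} \lambda_n \Mm_{j=1}^{n}\big(x_j, \lambda_j\big) \le \sum_{n=1}^{\infty} \lambda_n \Mst_{j=1}^{n}(x_j, \lambda_j) \le \Hc[\lambda]{\M_\#} \sum_{n=1}^{\infty} \lambda_n x_n.
\end{equation*}
This shows $\Hc[\lambda]{\M} \le \Hc[\lambda]{\M_\#}$, which combined with the previous paragraph produces equality in \eq{M-M*}.

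The only delicate step is the passage from the scaled finite Hardy estimate to its $\M_\#$-counterpart through $\liminf$. The interchange is safe precisely because the sum is finite and each summand $\tfrac{1}{t}\, \Mm_{j=1}^{i}(tx_j, \lambda_j)$ is nonnegative, so iteratively applying $\liminf(a + b) \ge \liminf a + \liminf b$ cannot produce an indeterminate form. Everything else is routine: the admissibility of $tx \in I^n$ for small $t$ comes from $\inf I = 0$, the passage from finite partial sums to infinite series is monotone convergence, and Lemma~\ref{lem:AMH2.1} delivers the pointwise comparison needed for the reverse direction.
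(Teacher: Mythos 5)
Your argument is correct and is essentially the paper's own proof: apply Lemma~\ref{lem:5} to the scaled vector $tx$, divide by $t$, use the superadditivity of $\liminf$ to pass to $\M_\#$ in the finite inequality, let $n\to\infty$, and obtain equality in the Jensen concave case from the comparison $\M\le\M_\#$ of Lemma~\ref{lem:AMH2.1}. One small correction: since $\Hc[\lambda]{\M_\#}$ is defined through sequences in $\R_+$ (the domain of $\M_\#$), the test vector should be taken as $x\in\R_+^n$ rather than $x\in I^n$; your own observation that $tx\in I^n$ for all sufficiently small $t>0$ (because $I\supset(0,\varepsilon)$) applies verbatim to such $x$, so nothing else in the argument changes.
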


\begin{proof} 
Let $(x_m)$ be a sequence in $\R_+$. For any fixed $n\in\N$, there exists a positive number $\tau_n$ such that $t(x_1,\dots,x_n)\in I^n$ for $t\in(0,\tau_n]$. Using Lemma~\ref{lem:5}, it follows that
\Eq{*}{
\sum_{i=1}^n \lambda_i \cdot \Mm_{j=1}^i \big(tx_j,\lambda_j\big)\le \Hc[\lambda]\M \sum_{i=1}^n \lambda_itx_i.
}
Dividing by $t\in(0,\tau_n]$, and then taking the liminf of the left hand side of the inequality so obtained as $t\to0^+$, (by the superadditivity of the liminf operation), we arrive at
\Eq{*}{
\sum_{i=1}^n \lambda_i \cdot \liminf_{t\to0^+}\frac1t\Mm_{j=1}^i \big(tx_j,\lambda_j\big)\le \Hc[\lambda]\M \sum_{i=1}^n \lambda_ix_i.
}
This inequality is equivalent to 
\Eq{*}{
\sum_{i=1}^n \lambda_i \cdot \Mst_{j=1}^i \big(x_j,\lambda_j\big)\le \Hc[\lambda]\M \sum_{i=1}^n \lambda_ix_i.
}
Finally, passing the limit $n\to\infty$ in the above inequality, we get
\Eq{*}{
\sum_{n=1}^{\infty} \lambda_n \cdot \Mst_{i=1}^n\big(x_i,\lambda_i\big) 
\le \Hc[\lambda]\M \cdot \sum_{n=1}^{\infty} \lambda_nx_n,
}
which proves that $\Hc[\lambda]{\M_\#}\leq \Hc[\lambda]\M$.

If, additionally, $\M$ is Jensen concave, then, by Lemma~\ref{lem:AMH2.1}, the comparison inequality $\M\leq\M_\#$ is valid, and hence, \eq{M-M*} must hold with equality, indeed.
\end{proof}

The following result of the paper  \cite{PalPas19a}, which is a weighted analogue of \cite[Thm 3.3]{PalPas16}, provides a lower bound for the Hardy constant $\Hc[\lambda]\M$.

\begin{lem} 
Let $\M$ be a weighted mean on $I$, $\lambda\in W_0$, and $(x_n)_{n=1}^\infty$ be a sequence of elements in $I$.
If\, $\sum_{n=1}^\infty \lambda_nx_n=\infty$, then 
\Eq{*}{
\Hc[\lambda]\M \ge \liminf_{n \to \infty} \frac{1}{x_n} \Mm_{i=1}^n \big(x_i,\lambda_i\big).
}
\end{lem}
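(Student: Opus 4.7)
The plan is to reduce to the finite Hardy inequality from Lemma~\ref{lem:5} and then exploit the divergence hypothesis to absorb the ``initial segment'' error. Denote the liminf on the right-hand side by $L$. We may freely assume that $L>0$ and $\Hc[\lambda]\M<\infty$, since otherwise the conclusion is trivial.

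First, fix an arbitrary $L'\in(0,L)$. By the definition of liminf, there exists $N\in\N$ such that
\[
\Mm_{i=1}^n(x_i,\lambda_i)\ge L' x_n\qquad\text{for all }n\ge N.
\]
Applied term by term, this turns the left-hand side of \eq{FinWH} into a lower bound of the form $L'\sum_{i=N}^n\lambda_i x_i$, after throwing away the (nonnegative) terms with $i<N$. Next, I would combine this with Lemma~\ref{lem:5}, which gives
\[
L'\sum_{i=N}^n\lambda_i x_i
\le \sum_{i=1}^n\lambda_i\cdot \Mm_{j=1}^i(x_j,\lambda_j)
\le \Hc[\lambda]\M\sum_{i=1}^n\lambda_i x_i.
\]

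Rewriting $\sum_{i=N}^n\lambda_i x_i=\sum_{i=1}^n\lambda_i x_i-\sum_{i=1}^{N-1}\lambda_i x_i$ and dividing both sides by $\sum_{i=1}^n\lambda_i x_i$ (which is eventually positive and nonzero for $n$ large), I obtain
\[
L'\Bigl(1-\frac{\sum_{i=1}^{N-1}\lambda_i x_i}{\sum_{i=1}^n\lambda_i x_i}\Bigr)\le \Hc[\lambda]\M.
\]

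Here is where the hypothesis $\sum_{n=1}^\infty\lambda_n x_n=\infty$ enters: the finite numerator $\sum_{i=1}^{N-1}\lambda_i x_i$ is negligible compared to the divergent denominator, so the fraction tends to $0$ as $n\to\infty$. Passing to the limit yields $L'\le\Hc[\lambda]\M$, and since $L'<L$ was arbitrary, the desired inequality $L\le\Hc[\lambda]\M$ follows.

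There is no genuine obstacle here; the only thing to watch is a clean bookkeeping of the initial index $N$ and the (possible) degenerate cases $L=\infty$, $L=0$, $\Hc[\lambda]\M=\infty$, or $\sum_{i=1}^n\lambda_i x_i=0$ for small $n$. All of these are handled by splitting off the trivial cases at the start, reducing to the substantive situation $0<L'<L<\infty$ and $\Hc[\lambda]\M<\infty$ described above.
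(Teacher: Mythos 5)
Your proof is correct: the reduction to the finite inequality of Lemma~\ref{lem:5}, the lower bound $L'\sum_{i=N}^n\lambda_i x_i$ obtained from the liminf, and the use of $\sum_n\lambda_n x_n=\infty$ to make the initial segment negligible are exactly the right ingredients, and the degenerate cases are handled properly. The paper itself imports this lemma from \cite{PalPas19a} without proof, and your argument is essentially the same truncation-plus-divergence argument used there (and in the unweighted antecedent \cite{PalPas16}), so there is nothing to add.
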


By taking $x_n:=\frac{y}{\Lambda_n}$ for a fixed $y\in \lambda_1I$ in the above theorem, the first inequality of the following consequence was deduced in \cite{PalPas19a}. The second inequality is an application of the first one to the mean $\M_\#$ and Theorem~\ref{thm:M*}.

\begin{cor} 
\label{cor:HardyconstantforKedlaya}
Let $\M$ be a weighted mean on $I$ and $\lambda \in W_0$ be a weight sequence with $\sum_{n=1}^\infty\lambda_n=\infty$. 
Then we have the following two lower estimates for the $\lambda$-Hardy constant $\Hc[\lambda]\M$:
\Eq{*}{
\Hc[\lambda]\M 
  \ge\sup_{y\in\lambda_1I} \liminf_{n \to \infty} \frac {\Lambda_n}y \cdot \Mm_{k=1}^n \Big(\frac{y}{\Lambda_k},\lambda_k\Big)
  =:\Est[\lambda]\M
}
and 
\Eq{*}{
\Hc[\lambda]\M 
  \ge \liminf_{n \to \infty} \Mst_{k=1}^n \Big(\frac{\Lambda_n}{\Lambda_k},\lambda_k\Big)
  =\Est[\lambda]{\M_\#}.
}
\end{cor}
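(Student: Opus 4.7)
The plan is to apply the preceding lemma twice: once directly (with a cleverly chosen test sequence) to obtain the first inequality, and then once to the lower homogenization $\M_\#$, combined with Theorem~\ref{thm:M*}, to obtain the second.

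For the first inequality, I would fix $y\in\lambda_1 I$ and set $x_n:=y/\Lambda_n$. Since $\Lambda_n\ge\lambda_1$ and $y/\lambda_1\in I$, the monotone sequence $(x_n)$ lies in $I$. Before invoking the preceding lemma I must verify its hypothesis $\sum\lambda_n x_n=\infty$: this reduces to $\sum\lambda_n/\Lambda_n=\infty$, which follows from $\Lambda_n\to\infty$ (itself a consequence of $\sum\lambda_n=\infty$) via the classical telescoping estimate $\lambda_n/\Lambda_n\ge\log(\Lambda_n/\Lambda_{n-1})$ for $n\ge 2$. The lemma then yields
\Eq{*}{
\Hc[\lambda]\M\ge\liminf_{n\to\infty}\frac{1}{x_n}\Mm_{k=1}^n(x_k,\lambda_k)
=\liminf_{n\to\infty}\frac{\Lambda_n}{y}\Mm_{k=1}^n\Big(\frac{y}{\Lambda_k},\lambda_k\Big),
}
and taking the supremum over admissible $y$ completes the first estimate.

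For the second inequality, I would apply the first estimate (already established) to the lower homogenization $\M_\#$, which is a homogeneous weighted mean on $\R_+$. Homogeneity lets me absorb the factor $\Lambda_n/y$ into the mean, yielding $\frac{\Lambda_n}{y}\,\Mst_{k=1}^n\Big(\frac{y}{\Lambda_k},\lambda_k\Big)=\Mst_{k=1}^n\Big(\frac{\Lambda_n}{\Lambda_k},\lambda_k\Big)$, an expression that no longer depends on $y$. Hence the supremum over $y$ trivialises, giving
\Eq{*}{
\Hc[\lambda]{\M_\#}\ge\liminf_{n\to\infty}\Mst_{k=1}^n\Big(\frac{\Lambda_n}{\Lambda_k},\lambda_k\Big)=\Est[\lambda]{\M_\#}.
}
Combining this with Theorem~\ref{thm:M*}, which asserts $\Hc[\lambda]{\M_\#}\le\Hc[\lambda]\M$, produces the desired chain $\Hc[\lambda]\M\ge\Hc[\lambda]{\M_\#}\ge\Est[\lambda]{\M_\#}$.

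The only non-routine ingredient is verifying divergence of $\sum\lambda_n x_n$ for the test sequence; once that telescoping estimate is in place, the remainder of the argument is a direct combination of the preceding lemma, the homogeneity of $\M_\#$, and Theorem~\ref{thm:M*}, with no further surprises.
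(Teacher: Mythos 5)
Your overall route is exactly the paper's: the first estimate is obtained by substituting $x_n:=y/\Lambda_n$ into the preceding lemma (after checking that this sequence stays in $I$, which you do), and the second by applying that estimate to the homogeneous mean $\M_\#$, so that the factor $\Lambda_n/y$ is absorbed and the supremum over $y$ becomes vacuous, and then invoking Theorem~\ref{thm:M*}. The one flaw is your justification of $\sum_n\lambda_nx_n=\infty$: the claimed estimate $\lambda_n/\Lambda_n\ge\log(\Lambda_n/\Lambda_{n-1})$ is false and in fact reversed, since $1/x\ge 1/\Lambda_n$ on $[\Lambda_{n-1},\Lambda_n]$ gives $\log(\Lambda_n/\Lambda_{n-1})=\int_{\Lambda_{n-1}}^{\Lambda_n}\frac{dx}{x}\ge\lambda_n/\Lambda_n$ (try $\lambda_n\equiv1$: $1/n<\log\frac{n}{n-1}$), and an upper bound by a divergent telescoping sum proves nothing. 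The divergence of $\sum\lambda_n/\Lambda_n$ when $\Lambda_n\to\infty$ is nevertheless true (Abel--Dini; it is also the content of Lemma~\ref{lem:2} of the paper) and can be seen, for instance, from $\prod_{n=2}^{N}\bigl(1-\lambda_n/\Lambda_n\bigr)=\Lambda_1/\Lambda_N\to0$, which would be impossible if the series converged. With that repair the argument is complete and coincides with the paper's.
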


Finally let us recall one of key results from \cite{PalPas19a}. 

\begin{prop}[\cite{PalPas19a}, Corollary 4.3]
\label{prop:BJ}
Let $\M$ be a symmetric, monotone and Jensen-concave weighted mean which is continuous in the weights and $\lambda \in W_0$ such that $\big(\tfrac{\lambda_n}{\lambda_1+\dots+\lambda_n}\big)_{n=1}^\infty$ is nonincreasing. Then $\Hc[\lambda]\M \le \Est[\lambda]\M$. Furthermore, if $\sum_{n=1}^\infty\lambda_n=\infty$, then $\Hc[\lambda]\M = \Est[\lambda]\M$.
\end{prop}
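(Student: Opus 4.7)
The strategy is to establish the upper bound $\Hc[\lambda]\M\le\Est[\lambda]\M$; combined with the lower estimate in Corollary~\ref{cor:HardyconstantforKedlaya}, this yields the equality assertion when $\sum\lambda_n=\infty$. By Lemma~\ref{lem:5} the infinite Hardy inequality reduces to its finite version: it suffices to prove, for every $N\in\N$ and $x\in I^N$,
$$
\sum_{i=1}^N \lambda_i\,\Mm_{j=1}^i(x_j,\lambda_j) \le \bigl(\Est[\lambda]\M+\epsilon\bigr)\cdot \sum_{i=1}^N \lambda_i x_i
$$
for arbitrary $\epsilon>0$, and then to pass to $N\to\infty$ followed by $\epsilon\to 0$.

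The main tool is to linearize the partial means using Jensen-concavity. By the Bernstein--Doetsch theorem (invoked earlier in the excerpt), for each fixed $i$ the map $x\mapsto\Mm_{j=1}^i(x_j,\lambda_j)$ is concave on $I^i$, hence lies below any of its supporting affine forms. I would take such supporting hyperplanes at the reference sequence $x_k^{(y)}:=y/\Lambda_k$, where $\Lambda_k:=\lambda_1+\cdots+\lambda_k$, choosing $y\in\lambda_1 I$ close to realizing the supremum in the definition of $\Est[\lambda]\M$. For each $i$ this yields an inequality of the form
$$
\Mm_{j=1}^i(x_j,\lambda_j) \le \Mm_{j=1}^i(y/\Lambda_j,\lambda_j) + \sum_{j=1}^i \alpha_j^{(i)}\bigl(x_j - y/\Lambda_j\bigr),
$$
with coefficients $\alpha_j^{(i)}\ge 0$ (nonnegative by monotonicity of $\M$) encoding the local behaviour of $\M$ near the reference sequence.

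After multiplying by $\lambda_i$ and summing over $i=1,\ldots,N$, the diagonal terms $\sum_i \lambda_i\,\Mm_{j=1}^i(y/\Lambda_j,\lambda_j)$ are, by the very definition of $\Est[\lambda]\M$, bounded above by $(\Est[\lambda]\M+\epsilon)\sum_i \lambda_i\cdot y/\Lambda_i$. Interchanging the order of summation in the remaining contribution produces $\sum_{j=1}^N (x_j-y/\Lambda_j)\sum_{i\ge j}\lambda_i\alpha_j^{(i)}$; the hypothesis that $(\lambda_n/\Lambda_n)_n$ is nonincreasing is precisely what permits the inner tail $\sum_{i\ge j}\lambda_i\alpha_j^{(i)}$ to be majorized by $(\Est[\lambda]\M+\epsilon)\lambda_j$, with the continuity of $\M$ in the weights being used to identify $\alpha_j^{(i)}$ with its limiting value as $i\to\infty$. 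I expect the main obstacle to be exactly this uniform control of the off-diagonal coefficients $\alpha_j^{(i)}$ through the Abel rearrangement, and verifying that the weight-monotonicity hypothesis is sharp for absorbing them into the main estimate without loss of constants. Once that bookkeeping is executed, passing to the limit delivers the upper bound, and the equality assertion follows immediately.
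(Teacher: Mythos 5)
Your plan has two genuine gaps, and they are fatal as stated. First, the ``diagonal'' step misreads the definition of $\Est[\lambda]{\M}$: since $\Est[\lambda]{\M}$ is a supremum over $y$ of a \emph{liminf} in $n$, fixing $y$ only gives $\Mm_{j=1}^{n}\big(y/\Lambda_j,\lambda_j\big)\le(\Est[\lambda]{\M}+\epsilon)\,y/\Lambda_n$ along \emph{some} subsequence of indices $n$; your decomposition needs this bound for every $i\le N$, and nothing in the hypotheses rules out the remaining partial means being much larger (the limsup is not controlled by $\Est[\lambda]{\M}$). Second, the absorption of the affine correction terms cannot work with a reference point independent of $x$. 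Because $\M$ is not assumed homogeneous, the supporting-hyperplane bound has genuine constant terms, and you need $\sum_{j}(x_j-y/\Lambda_j)\,c_j\le(\Est[\lambda]{\M}+\epsilon)\sum_j\lambda_j(x_j-y/\Lambda_j)$ with $c_j=\sum_{i\ge j}\lambda_i\alpha_j^{(i)}\ge0$ to hold for \emph{all} $x\in I^N$; letting $x_j$ grow forces $c_j\le(\Est[\lambda]{\M}+\epsilon)\lambda_j$, while letting the $x_j\to0^+$ forces the reverse inequality in the aggregate, so in effect one would need the exact identities $c_j=(\Est[\lambda]{\M}+\epsilon)\lambda_j$, which cannot be arranged. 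You flag this as ``the main obstacle'' but offer no mechanism, and the appeal to continuity in the weights to identify the subgradients is not an argument. Note also that your sketch never uses symmetry, which is an essential hypothesis.

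The cited proof proceeds quite differently, through the weighted Kedlaya inequality of \cite{PalPas18b}: under exactly these hypotheses (symmetry, Jensen concavity, monotonicity, continuity in the weights, $\lambda_n/\Lambda_n$ nonincreasing) one has $\mathscr{A}_{i=1}^n\big(\Mm_{j=1}^i(x_j,\lambda_j),\lambda_i\big)\le\Mm_{i=1}^n\big(\mathscr{A}_{j=1}^i(x_j,\lambda_j),\lambda_i\big)$, where $\mathscr{A}_{j=1}^i(x_j,\lambda_j)=\tfrac1{\Lambda_i}\sum_{j=1}^i\lambda_jx_j$. With $S:=\sum_{j=1}^\infty\lambda_jx_j$ finite (otherwise there is nothing to prove), monotonicity gives $\mathscr{A}_{j=1}^i(x_j,\lambda_j)\le S/\Lambda_i$, hence $\sum_{i=1}^n\lambda_i\,\Mm_{j=1}^i(x_j,\lambda_j)\le\Lambda_n\,\Mm_{i=1}^n\big(S/\Lambda_i,\lambda_i\big)$; since the left-hand side increases to the full series, one may pass to the limit along the subsequence realizing the liminf, which is precisely why the liminf in the definition of $\Est[\lambda]{\M}$ suffices. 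In other words, the comparison sequence is built from the partial arithmetic means of $x$ itself (this is where symmetry, concavity and continuity in the weights, together with the monotonicity of $\lambda_n/\Lambda_n$, enter via the Kedlaya step), not from a fixed sequence $y/\Lambda_k$; a fixed reference point cannot emulate this. Your final remark—that the equality under $\sum\lambda_n=\infty$ follows from Corollary~\ref{cor:HardyconstantforKedlaya}—is correct, but the upper bound itself needs the Kedlaya-type argument or a substitute for the two missing steps above.
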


\section{Auxiliary Results}

In this section we prove a number of results which will be instrumental in the forthcoming sections.
Throughout this section, let $\lambda \in W_0$ be a fixed weight sequence and $\Lambda_n:=\lambda_1+\cdots+\lambda_n$ for $n\in\N$.

\begin{lem} 
\label{lem:2}
The sequence $(\Lambda_n)$ and the series $\sum\lambda_n/\Lambda_n$ are equi-convergent (either both of them are convergent or both of them are divergent).
\end{lem}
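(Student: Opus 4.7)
The plan is to prove the two implications separately, treating $(\Lambda_n)$ as a nondecreasing positive sequence (with $\Lambda_n\geq \Lambda_1 = \lambda_1 > 0$) and splitting on whether it is bounded above.

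First, I would handle the easy direction: if $(\Lambda_n)$ converges to some finite limit $\Lambda_\infty$, then since $\Lambda_n\geq\Lambda_1>0$ for every $n$, the crude bound
\Eq{*}{
  \sum_{n=1}^\infty \frac{\lambda_n}{\Lambda_n}\leq \frac{1}{\Lambda_1}\sum_{n=1}^\infty \lambda_n = \frac{\Lambda_\infty}{\Lambda_1}<\infty
}
shows that the series $\sum \lambda_n/\Lambda_n$ converges.

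For the converse, I would assume $\Lambda_n\to\infty$ and show that the partial sums $S_N:=\sum_{n=1}^N \lambda_n/\Lambda_n$ violate the Cauchy criterion. Given any $N\in\N$, since $\Lambda_n\to\infty$, I can pick $M>N$ with $\Lambda_M\geq 2\Lambda_N$. Using monotonicity of $(\Lambda_n)$ to replace each denominator $\Lambda_n$ ($N<n\leq M$) by the larger quantity $\Lambda_M$, I obtain
\Eq{*}{
  S_M-S_N=\sum_{n=N+1}^M \frac{\lambda_n}{\Lambda_n}\geq \sum_{n=N+1}^M \frac{\lambda_n}{\Lambda_M}=\frac{\Lambda_M-\Lambda_N}{\Lambda_M}\geq \frac{1}{2},
}
so $(S_N)$ is not Cauchy, and hence the series $\sum \lambda_n/\Lambda_n$ diverges.

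There is no real obstacle here; the only mildly delicate point is the block estimate in the divergent case, where the standard trick is to pass to a doubling subsequence so that $\Lambda_M-\Lambda_N\geq \tfrac12 \Lambda_M$. The rest is bookkeeping with the monotonicity of $(\Lambda_n)$ and the positivity guaranteed by $\lambda_1>0$.
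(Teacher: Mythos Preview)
Your proof is correct. The first implication (finite $\Lambda_\infty$ forces convergence of $\sum\lambda_n/\Lambda_n$) is argued exactly as in the paper, via the crude bound $\lambda_n/\Lambda_n\le\lambda_n/\Lambda_1$.

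For the converse you take a genuinely different route. The paper argues the contrapositive directly: assuming $\sum\lambda_n/\Lambda_n<\infty$, it first notes that eventually $\lambda_n/\Lambda_n<\tfrac12$, i.e.\ $\Lambda_{n-1}/\Lambda_n>\tfrac12$, and then compares with the integral of $1/x$ via
\Eq{*}{
  \frac{\lambda_n}{\Lambda_n}\ge\frac12\,\frac{\lambda_n}{\Lambda_{n-1}}\ge\frac12\int_{\Lambda_{n-1}}^{\Lambda_n}\frac{dx}{x},
}
so the tail of the series dominates $\tfrac12\int_{\Lambda_{n_0-1}}^{\Lambda_\infty}\tfrac{dx}{x}$, forcing $\Lambda_\infty<\infty$. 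Your argument instead assumes $\Lambda_n\to\infty$ and shows the partial sums fail the Cauchy criterion by a doubling trick: choosing $M>N$ with $\Lambda_M\ge 2\Lambda_N$ and bounding the block below by $(\Lambda_M-\Lambda_N)/\Lambda_M\ge\tfrac12$. Your approach is the more elementary of the two, needing no calculus at all; the paper's integral comparison, on the other hand, makes the quantitative relationship with $\log\Lambda_n$ explicit, which is natural given the later use of such comparisons (e.g.\ in Lemma~\ref{lem:eq-conv}). Either argument is perfectly adequate here.
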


\begin{proof}
If $\Lambda_\infty:=\sum_{n=1}^{\infty} \lambda_n<\infty$, then
\Eq{*}{
\sum_{n=1}^{\infty} \frac{\lambda_n}{\Lambda_n} \le \sum_{n=1}^{\infty} \frac{\lambda_n}{\Lambda_1}=\frac{\Lambda_\infty}{\Lambda_1}<\infty.
}
Conversely, if $\sum_{n=1}^{\infty} \frac{\lambda_n}{\Lambda_n}< \infty$, then there exists $n_0 \in \N$ such that for $n\ge n_0$,
$\lambda_n/\Lambda_n<\tfrac12$. Equivalently, $\Lambda_{n-1}/\Lambda_n=1-\lambda_n/\Lambda_n>\tfrac12$ for $n \ge n_0$. Thus,
\Eq{*}{
\infty > \sum_{n=n_0}^{\infty}\frac{\lambda_n}{\Lambda_n} 
\ge \frac{1}{2} \sum_{n=n_0}^{\infty}\frac{\lambda_n}{\Lambda_{n-1}}
\ge \frac{1}{2} \sum_{n=n_0}^{\infty}\int_{\Lambda_{n-1}}^{\Lambda_n}\tfrac1x\:dx
= \frac{1}{2} \int_{\Lambda_{n_0-1}}^{\Lambda_\infty}\tfrac1x\:dx.
}
As this integral is finite, we obtain $\Lambda_\infty<\infty$.
\end{proof}

\begin{lem}
\label{lem:maxk1n}
If $\lambda_n/\Lambda_n \to 0$ and $\Lambda_n\to\infty$, then 
\Eq{maxk1n}{
\lim_{n \to \infty} \frac{\max(\lambda_1,\dots,\lambda_n)}{\Lambda_n} =0.
}
\end{lem}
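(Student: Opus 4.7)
The plan is a standard $\varepsilon$–$N$ argument, exploiting the fact that the max splits into a bounded ``old'' part (fixed once we truncate) and a ``new'' part that is uniformly controlled by the hypothesis $\lambda_n/\Lambda_n \to 0$.

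\medskip

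Fix $\varepsilon>0$. First I would use the assumption $\lambda_n/\Lambda_n \to 0$ to choose $N\in\N$ such that
$\lambda_n \le \tfrac{\varepsilon}{2}\Lambda_n$ for every $n\ge N$. Write $M_n:=\max(\lambda_1,\dots,\lambda_n)$ and note that for $n\ge N$,
\Eq{*}{
M_n=\max\big(M_{N-1},\,\max_{N\le k\le n}\lambda_k\big)\le M_{N-1}+\max_{N\le k\le n}\lambda_k.
}
For each index $k$ in the range $N\le k\le n$ we have $\lambda_k\le\tfrac{\varepsilon}{2}\Lambda_k\le \tfrac{\varepsilon}{2}\Lambda_n$ (using monotonicity of $(\Lambda_n)$), so $\max_{N\le k\le n}\lambda_k\le \tfrac{\varepsilon}{2}\Lambda_n$. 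Hence
\Eq{*}{
\frac{M_n}{\Lambda_n}\le \frac{M_{N-1}}{\Lambda_n}+\frac{\varepsilon}{2}.
}

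The second step is to invoke the hypothesis $\Lambda_n\to\infty$: since $M_{N-1}$ is a fixed finite number, there exists $N'\ge N$ such that $M_{N-1}/\Lambda_n<\varepsilon/2$ for all $n\ge N'$. Combining the two estimates yields $M_n/\Lambda_n<\varepsilon$ for all $n\ge N'$, which is precisely \eq{maxk1n}.

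\medskip

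There is no real obstacle in this argument; the only thing to be careful about is that both hypotheses are genuinely used (the condition $\lambda_n/\Lambda_n\to 0$ alone is insufficient, as the example $\lambda_n=n^{-2}$ shows: there $\Lambda_n$ is bounded and $M_n/\Lambda_n$ does not tend to $0$), so the proof must draw on each at exactly one place—the first step uses $\lambda_n/\Lambda_n\to 0$, and the second uses $\Lambda_n\to\infty$ to dispose of the fixed ``initial'' contribution $M_{N-1}$.
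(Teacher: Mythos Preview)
Your proof is correct and follows essentially the same approach as the paper: split the maximum into an ``old'' part up to a cutoff index (controlled by $\Lambda_n\to\infty$) and a ``new'' part beyond the cutoff (controlled by $\lambda_k/\Lambda_k\to0$ together with $\Lambda_k\le\Lambda_n$). The only cosmetic difference is that the paper bounds each $\lambda_k/\Lambda_n$ directly by $\varepsilon$ in both ranges, whereas you use the slightly wasteful $\max(a,b)\le a+b$ and work with $\varepsilon/2$; this has no effect on the argument.
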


\begin{proof}
Fix $\varepsilon>0$. There exists $k_0 \in \N$ such that $\lambda_k/\Lambda_k \le \varepsilon$ for all $k > k_0$.
Take $n_0 \ge k_0$ such that $\Lambda_{n_0} \ge \tfrac1\varepsilon \cdot \max(\lambda_1,\ldots,\lambda_{k_0})$.
Fix $n>n_0$ arbitrarily. Then
\Eq{*}{
\frac{\lambda_k}{\Lambda_n} 
&\le  \frac{\max(\lambda_1,\ldots,\lambda_{k_0})}{\Lambda_{n_0}} \le \varepsilon, \qquad &&k \in\{1,\dots, k_0\}, \\
\frac{\lambda_k}{\Lambda_n} 
&= \frac{\lambda_k}{\Lambda_k} \cdot \frac{\Lambda_k}{\Lambda_n} \le \varepsilon \cdot 1  =\varepsilon, \qquad &&k \in \{k_0+1,\dots,n\}.
}
Therefore, 
\Eq{*}{
\frac{\max(\lambda_1,\dots,\lambda_n)}{\Lambda_n} \le \varepsilon \quad\text{ for every }\quad n \ge n_0,
}
which completes the proof of the statement.
\end{proof}

\begin{lem}\label{lem:eq-conv}
Let $\varphi \colon (0,1] \to \R$ be a continuous and nonincreasing function and $q\in(0,1)$. Then the integral $\int_0^1 \varphi$ and the series $\sum_{k=1}^{\infty} q^k \varphi\big(q^k\big)$ are equiconvergent. Furthermore,
\Eq{q}{
\frac{q}{1-q}\int_{0}^{1} \varphi
\leq \sum_{k=1}^\infty q^k \varphi \big(q^k\big) 
\leq \frac{1}{1-q}\int_{0}^{q} \varphi.
}
\end{lem}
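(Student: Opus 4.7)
The plan is a straightforward Cauchy condensation style argument based on comparing the integral to Riemann sums on the dyadic (well, $q$-adic) partition $1 = q^0 > q > q^2 > \cdots$ of $(0,1]$. The key geometric fact is that the $k$-th interval $[q^{k+1}, q^k]$ has length $q^k - q^{k+1} = q^k(1-q)$, which is exactly the scale factor that appears in the series.

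First I would record the monotonicity bracket: since $\varphi$ is nonincreasing, for every $k \ge 0$ and every $x \in [q^{k+1},q^k]$,
\[
\varphi(q^k) \le \varphi(x) \le \varphi(q^{k+1}).
\]
Integrating over $[q^{k+1},q^k]$ then yields
\[
\varphi(q^k)\,q^k(1-q) \;\le\; \int_{q^{k+1}}^{q^k}\varphi \;\le\; \varphi(q^{k+1})\,q^k(1-q).
\]
For the \emph{lower} estimate in \eq{q}, I sum the right-hand inequality from $k=0$ to $\infty$: the integrals telescope into $\int_0^1\varphi$, while the right-hand side, after re-indexing $j=k+1$, becomes $\tfrac{1-q}{q}\sum_{j=1}^\infty q^j\varphi(q^j)$. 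For the \emph{upper} estimate, I sum the left-hand inequality from $k=1$ to $\infty$: the integrals collapse to $\int_0^q\varphi$, and the left side is $(1-q)\sum_{k=1}^\infty q^k\varphi(q^k)$. Rearranging gives both bounds in \eq{q} simultaneously, with no extra calculation.

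The only subtle point is the equiconvergence claim, because \eq{q} is a two-sided estimate only when the quantities have compatible signs. This is easily handled by reducing to the nonnegative case: set $\psi := \varphi - \varphi(1)$, which is continuous, nonincreasing and satisfies $\psi \ge 0$ on $(0,1]$. Both \eq{q} applied to $\psi$ then sandwich $\sum q^k\psi(q^k)$ between positive multiples of $\int_0^1\psi$ and $\int_0^q\psi$, so the series and the integral are simultaneously finite or infinite. Adding back the constant contributions $\varphi(1)\tfrac{q}{1-q}$ to the series and $\varphi(1)$ to the integral (both finite) transfers this equiconvergence to $\varphi$ itself.

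I do not anticipate a real obstacle here; the only minor care needed is to make sure the sign of $\varphi$ does not interfere, which is why the reduction to $\psi = \varphi-\varphi(1)$ is worth stating explicitly. Everything else is an immediate integral/sum comparison on a geometric partition.
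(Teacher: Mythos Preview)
Your argument is correct and essentially identical to the paper's: both compare $\varphi$ on each geometric subinterval $[q^{k+1},q^k]$ with its endpoint values, sum over $k$, and handle signs by subtracting the constant $\varphi(1)$. The only cosmetic difference is ordering --- the paper performs the reduction to $\varphi(1)=0$ \emph{before} establishing \eq{q} (so that every summed term is nonnegative from the start), whereas you derive \eq{q} first and invoke the reduction only for the equiconvergence claim.
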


\begin{proof}
If $\varphi$ is constant then both the integral and the series are convergent. Therefore, replacing $\varphi$ by $\varphi-\varphi(1)$ if necessary, we may assume that $\varphi(1)=0$.
Using the nonincerasingness of $\varphi$, for all $k\in\N$, we obtain 
\Eq{*}{
 q \int_{q^{k}}^{q^{k-1}} \varphi
\leq q\int_{q^{k}}^{q^{k-1}} \varphi(q^k)
= (1-q)q^k \varphi \big(q^k\big)
= \int_{q^{k+1}}^{q^{k}}\varphi(q^k)
\leq \int_{q^{k+1}}^{q^{k}} \varphi.
}
Summing up these inequalities side by side, the inequality \eq{q} follows.
which proves the integrability of $\varphi$ over $(0,1]$. This inequality also shows the equiconvergence of the integral and the series. 
\end{proof}

\begin{prop}
\label{prop:genA}
Let $\varphi \colon (0,1] \to \R$ be a continuous and monotone function. If  $\Lambda_n\to\infty$ and the sequence $\big(\frac{\lambda_n}{\Lambda_n}\big)$ is convergent with a limit $\eta$ belonging to $[0,1)$, then
\Eq{lim}{
\lim_{n \to \infty} \sum_{k=1}^n \frac{\lambda_k}{\Lambda_n} \cdot \varphi \bigg(\frac{\Lambda_k}{\Lambda_n} \bigg)=
\begin{cases}
\int\limits_0^1 \varphi(x) dx & \mbox{if } \eta=0, \\[3mm] 
\sum\limits_{k=0}^{\infty} \eta (1-\eta)^k \varphi\big((1-\eta)^k\big) & \mbox{if }\eta \in (0,1).
\end{cases}
}
\end{prop}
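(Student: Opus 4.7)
The plan is to recast the partial sum $S_n:=\sum_{k=1}^n(\lambda_k/\Lambda_n)\varphi(\Lambda_k/\Lambda_n)$ as the integral of a step function on $(0,1]$ associated with the partition $0=\Lambda_0/\Lambda_n<\Lambda_1/\Lambda_n<\cdots<\Lambda_n/\Lambda_n=1$ (with $\Lambda_0:=0$), to identify its pointwise almost-everywhere limit, and then to pass to the limit via the dominated convergence theorem or Fatou's lemma, depending on whether the right-hand side of \eq{lim} is finite or infinite. I first reduce to the case where $\varphi\ge 0$ and nonincreasing: since $\sum_{k=1}^n\lambda_k/\Lambda_n=1$, any constant is reproduced identically on both sides of \eq{lim}, so writing $\varphi=\varphi(1)\pm\tilde\varphi$ with $\tilde\varphi\ge 0$ nonincreasing (the sign depending on the monotonicity of $\varphi$) and using linearity of both sides in $\varphi$ reduces the general claim to this special case.

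Set $q:=1-\eta$ and define $\psi_n,\psi_\infty\colon(0,1]\to[0,\infty]$ by $\psi_n(x):=\varphi(\Lambda_k/\Lambda_n)$ on $(\Lambda_{k-1}/\Lambda_n,\Lambda_k/\Lambda_n]$ for $k\in\{1,\ldots,n\}$, while $\psi_\infty(x):=\varphi(x)$ if $\eta=0$, and $\psi_\infty(x):=\varphi(q^j)$ on $(q^{j+1},q^j]$ if $\eta\in(0,1)$. By construction $\int_0^1\psi_n=S_n$, and a direct computation (using $q^j-q^{j+1}=\eta q^j$ in the geometric case) shows that $\int_0^1\psi_\infty$ equals the right-hand side of \eq{lim}. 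The monotonicity of $\varphi$ immediately yields the uniform bound $\psi_n(x)\le\varphi(x)$ on $(0,1]$, since the right endpoint $\Lambda_k/\Lambda_n$ of the cell containing $x$ is at least $x$.

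Next I verify the pointwise convergence $\psi_n\to\psi_\infty$ almost everywhere. When $\eta=0$, Lemma~\ref{lem:maxk1n} says the mesh $\max_k\lambda_k/\Lambda_n$ tends to $0$, so for each fixed $x\in(0,1)$ the right endpoint of the cell containing $x$ converges to $x$, and continuity of $\varphi$ concludes the argument. When $\eta\in(0,1)$, the hypothesis $\lambda_n/\Lambda_n\to\eta$ is equivalent to $\Lambda_{n-1}/\Lambda_n\to q$, and the finite telescoping identity $\Lambda_{n-j}/\Lambda_n=\prod_{i=n-j+1}^n\Lambda_{i-1}/\Lambda_i$ then delivers $\Lambda_{n-j}/\Lambda_n\to q^j$ for every fixed $j\ge 0$. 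Consequently any $x\in(q^{j+1},q^j)$ eventually lies in the $(n-j)$-th partition cell, and continuity of $\varphi$ gives $\psi_n(x)\to\varphi(q^j)=\psi_\infty(x)$; the exceptional set $\{q^j:j\ge 0\}$ is countable, hence negligible.

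Finally I pass to the limit of the integrals. If $\int_0^1\varphi<\infty$, then $\varphi$ is an integrable majorant of $(\psi_n)$ and the DCT yields $S_n=\int_0^1\psi_n\to\int_0^1\psi_\infty$. If $\int_0^1\varphi=+\infty$, then Lemma~\ref{lem:eq-conv} (the equiconvergence of $\int_0^1\varphi$ with $\sum q^k\varphi(q^k)$) forces $\int_0^1\psi_\infty=+\infty$ as well, so Fatou's lemma applied to the nonnegative $\psi_n$ delivers $\liminf S_n\ge\int_0^1\psi_\infty=+\infty$. I expect the most delicate step to be the pointwise convergence in the geometric case $\eta\in(0,1)$, specifically the passage from $\lambda_n/\Lambda_n\to\eta$ to $\Lambda_{n-j}/\Lambda_n\to q^j$ via the finite telescoping product; once that is secured, the DCT/Fatou dichotomy handles finite and infinite values of the right-hand side uniformly.
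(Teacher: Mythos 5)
Your argument is correct, and it takes a genuinely different route from the paper in the harder case. For $\eta=0$ the two proofs are close in spirit: the paper also compares the sum with the integral of the step function subordinate to the partition $\{\Lambda_k/\Lambda_n\}$ and uses Lemma~\ref{lem:maxk1n}, but it avoids citing Lebesgue convergence theorems, proving the lower bound by hand via uniform convergence of the step functions on $[\alpha,1]$ and the truncated integral $\int_\alpha^1\varphi$. For $\eta\in(0,1)$, however, the paper abandons step functions entirely: it shows termwise convergence $\lambda_{n-k}/\Lambda_n\cdot\varphi(\Lambda_{n-k}/\Lambda_n)\to\eta(1-\eta)^k\varphi((1-\eta)^k)$ (the same telescoping limit $\Lambda_{n-k}/\Lambda_n\to(1-\eta)^k$ that you use), gets $\liminf\ge$ the series by truncation (which settles the divergent case without any equiconvergence input), and then runs an $\varepsilon/4$ head--tail estimate, bounding the tail $\sum_{k\ge k_0}$ by $\int_0^{\Lambda_{n-k_0}/\Lambda_n}\varphi$ via nonincreasingness. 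You instead unify both cases by identifying the right-hand side of \eq{lim} as $\int_0^1\psi_\infty$ for an explicit limiting step function, proving a.e.\ pointwise convergence $\psi_n\to\psi_\infty$ with the uniform majorant $\psi_n\le\varphi$, and invoking dominated convergence when $\int_0^1\varphi<\infty$ and Fatou plus Lemma~\ref{lem:eq-conv} (to see that the geometric series diverges together with the integral) otherwise. Your version is shorter and treats the two regimes uniformly at the cost of leaning on measure-theoretic convergence theorems and on Lemma~\ref{lem:eq-conv} in the divergent geometric case; the paper's version is more elementary and self-contained, and its quantitative head--tail splitting makes explicit where monotonicity and continuity enter. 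Your reduction to $\varphi\ge0$ nonincreasing via $\varphi(1)\pm\tilde\varphi$ and the extended-real bookkeeping are handled correctly, so I see no gap.
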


\begin{proof}
We may suppose without loss of generality that $\varphi$ is nonincreasing.
The equality \eq{lim} is obvious if $\varphi$ is a constant function.
Therefore, replacing $\varphi$ by $\varphi-\varphi(1)$, we also can assume that $\varphi(1)=0$ and then $\varphi$ is nonnegative. Thus the integral and the sum of the series on the right hand side of formula \eq{lim} are well-defined, however, their value could be equal to $+\infty$.

Assume first that $\eta=0$. For $n\in\N$, consider the partition $0<\frac{\Lambda_1}{\Lambda_n}<\cdots<\frac{\Lambda_n}{\Lambda_n}=1$ of the interval $[0,1]$. By Lemma~\ref{lem:maxk1n}, the mesh size of this partition tends to zero as $n\to\infty$. The sum on the left hand side of \eq{lim} is the Lebesgue integral of the step function $\varphi_n$ defined as $\varphi_n(t)=\varphi(\Lambda_k/\Lambda_n)$ for $t\in(\Lambda_{k-1}/\Lambda_n,\Lambda_k/\Lambda_n]$. Due to the inequality $\varphi_n\leq\varphi$, we have that the left hand side of \eq{lim} is smaller than or equal to the right side. To prove the reversed inequality, let $c<\int_0^1\varphi(x)dx$. Then, there exists $0<\alpha<1$ such that $c<\int_\alpha^1\varphi(x)dx$. By the continuity of $\varphi$ and \eq{maxk1n}, the sequence of functions $\varphi_n$ pointwise converges to $\varphi$ and the convergence is uniform on the interval $[\alpha,1]$. Therefore the sequence of integrals $\int_\alpha^1\varphi_n(x)dx$ converges to $\int_\alpha^1\varphi(x)dx$. Thus, for large $n$, we have that 
\Eq{*}{
  c<\int_\alpha^1\varphi_n(x)dx\leq \int_0^1\varphi_n(x)dx=
 \sum_{k=1}^n \frac{\lambda_k}{\Lambda_n} \cdot \varphi \left(\frac{\Lambda_k}{\Lambda_n} \right).
}
This proves the reversed inequality in \eq{lim} in the case $\eta=0$.

From now on let us assume that $\eta>0$. We know that
\Eq{*}{
\lim_{n \to \infty}\frac{\Lambda_{n-1}}{\Lambda_n}=1-\eta
}
and therefore, by simple induction,
\Eq{*}{
\lim_{n \to \infty}\frac{\Lambda_{n-k}}{\Lambda_n}=(1-\eta)^k,\quad k\in\N.
}
In particular,
\Eq{*}{
\lim_{n \to \infty}\frac{\lambda_{n-k}}{\Lambda_n}
&=\lim_{n \to \infty}\frac{\Lambda_{n-k}}{\Lambda_n}-\frac{\Lambda_{n-k-1}}{\Lambda_n}
= (1-\eta)^k-(1-\eta)^{k+1}=\eta \cdot (1-\eta)^k.
}
Therefore, for all $k \in \N$,
\Eq{genA4}{
\lim_{n \to \infty} \frac{\lambda_{n-k}}{\Lambda_n} \cdot \varphi \left( \frac{\Lambda_{n-k}}{\Lambda_n} \right)
=\eta\cdot (1-\eta)^k \cdot \varphi \left((1-\eta)^k\right).
}

For all $n>m\geq1$, we have
\Eq{*}{
\sum_{k=1}^n \frac{\lambda_k}{\Lambda_n} \varphi\bigg(\frac{\Lambda_k}{\Lambda_n}\bigg)=
\sum_{k=0}^{n-1} \frac{\lambda_{n-k}}{\Lambda_n} \varphi\bigg(\frac{\Lambda_{n-k}}{\Lambda_n}\bigg)
\ge \sum_{k=0}^{m} \frac{\lambda_{n-k}}{\Lambda_n} \varphi\bigg(\frac{\Lambda_{n-k}}{\Lambda_n}\bigg).
}
Thus, using \eq{genA4}, the above inequality implies
\Eq{*}{
\liminf_{n \to \infty} \sum_{k=0}^n \frac{\lambda_k}{\Lambda_n} \varphi\bigg(\frac{\Lambda_k}{\Lambda_n}\bigg)
\ge \lim_{n \to \infty} \sum_{k=0}^{m} \frac{\lambda_{n-k}}{\Lambda_n} \varphi\bigg(\frac{\Lambda_{n-k}}{\Lambda_n}\bigg) 
=\sum_{k=0}^{m} \eta(1-\eta)^k \cdot \varphi\big((1-\eta)^k\big).
}
Upon taking the limit $m\to\infty$, it follows that 
\Eq{*}{
\liminf_{n \to \infty} \sum_{k=0}^n \frac{\lambda_k}{\Lambda_n} \varphi\bigg(\frac{\Lambda_k}{\Lambda_n}\bigg)
\geq
\sum_{k=0}^{\infty} \eta(1-\eta)^k \cdot \varphi\big((1-\eta)^k\big).
}
This implies also the equality in \eq{lim} if the right-hand-side series is divergent. Therefore, in the rest of the proof, we can assume that this series is convergent.

Fix $\varepsilon>0$ and choose $k_0$ such that
\Eq{genA3}{
\sum_{k=k_0}^{\infty} \eta\cdot (1-\eta)^k \cdot \varphi \left((1-\eta)^k\right) \le \frac \varepsilon 4 \quad \text{ and } \quad \int_0^{(1-\eta)^{k_0}} \varphi(x)dx \le \frac \varepsilon 4 .
}
Moreover, by \eq{genA4}, there exists $n_0$ such that for all $k \in \{0,1,\ldots,k_0-1\}$ and $n \ge n_0$
\Eq{genA1}{
\abs{ \frac{\lambda_{n-k}}{\Lambda_n} \cdot \varphi \left( \frac{\Lambda_{n-k}}{\Lambda_n} \right) -\eta\cdot (1-\eta)^k \cdot \varphi((1-\eta)^k)}\le \frac \varepsilon {4k_0} .
}

Now, applying the nonincreasingness of $\varphi$ again, for all $n \ge k_0$,
\Eq{genA5}{
\sum_{k=k_0}^{n-1} \frac{\lambda_{n-k}}{\Lambda_n} \cdot \varphi\left(\frac{\Lambda_{n-k}}{\Lambda_n} \right) 
= \sum_{k=1}^{n-k_0} \frac{\lambda_{k}}{\Lambda_n} \cdot \varphi\left(\frac{\Lambda_{k}}{\Lambda_n} \right)
\le \int_0^{\frac{\Lambda_{n-k_0}}{\Lambda_n}} \varphi(x) dx 
}
But
\Eq{*}{
\lim_{n \to \infty} \int_0^{\frac{\Lambda_{n-k_0}}{\Lambda_n}} \varphi(x) dx =
\int_0^{(1-\eta)^{k_0}} \varphi(x) dx \le \frac \varepsilon4,
}
so there exists $n_1 \ge \max(n_0,k_0)$ such that
\Eq{*}{
\int_0^{\frac{\Lambda_{n-k_0}}{\Lambda_n}} \varphi(x) dx \le \frac \varepsilon2 \qquad\text{ for all }\qquad n \ge n_1.
}
Thus, by \eq{genA5},
\Eq{genA6}{
\sum_{k=k_0}^{n-1} \frac{\lambda_{n-k}}{\Lambda_n} \cdot \varphi\left(\frac{\Lambda_{n-k}}{\Lambda_n} \right) \le \frac \varepsilon2 \qquad\text{ for all }\qquad n \ge n_1.
}
Finally, applying \eq{genA1}, \eq{genA6}, and \eq{genA3}, for all $n \ge n_1$,
\Eq{*}{
\bigg|\sum_{k=1}^n \frac{\lambda_k}{\Lambda_n} \cdot \varphi\left(\frac{\Lambda_k}{\Lambda_n} \right) &- \sum_{k=0}^{\infty} \eta (1-\eta)^k\varphi((1-\eta)^k)\bigg|\\
 &= \abs{\sum_{k=0}^{n-1} \frac{\lambda_{n-k}}{\Lambda_n} \cdot \varphi\left(\frac{\Lambda_{n-k}}{\Lambda_n} \right) - \sum_{k=0}^{\infty} \eta (1-\eta)^k\varphi((1-\eta)^k)}\\
  &\le \sum_{k=0}^{k_0-1} \abs{\frac{\lambda_{n-k}}{\Lambda_n} \cdot \varphi\left(\frac{\Lambda_{n-k}}{\Lambda_n} \right) - \eta (1-\eta)^k \varphi((1-\eta)^k)} \\
&\quad\qquad + \sum_{k=k_0}^{n-1} \frac{\lambda_{n-k}}{\Lambda_n} \cdot \varphi\left(\frac{\Lambda_{n-k}}{\Lambda_n} \right) + \sum_{k=k_0}^{\infty} \eta (1-\eta)^k \varphi((1-\eta)^k)\\
&\le k_0 \cdot \frac \varepsilon {4k_0} + \frac\varepsilon2 + \frac \varepsilon 4 = \varepsilon.
}
This completes the proof in the case $\eta\in(0,1)$. 
\end{proof}

It is worth mentioning that \eq{q} with $q=1-\eta$ follows that
\Eq{*}{
\lim_{\eta\to0^+}\sum\limits_{k=0}^{\infty} \eta (1-\eta)^k \varphi\big((1-\eta)^k\big)
=\int\limits_0^1 \varphi(x) dx,
}
which means, that the right hand side of \eq{lim} is a continuous function of $\eta$. Applying the above result to the power function $\varphi(x)=x^{-p}$ (where $p<1$), we immediately get 

\begin{cor} \label{cor:A}
If $p<1$, furthermore, $\Lambda_n\to\infty$ and $\frac{\lambda_n}{\Lambda_n} \to \eta\in[0,1)$, then
\Eq{*}{
\lim_{n \to \infty} \sum_{k=1}^n \frac{\lambda_k}{\Lambda_n} \cdot \left(\frac{\Lambda_k}{\Lambda_n} \right)^{-p}=
\begin{cases}
\dfrac{1}{1-p} & \eta=0, \\[2mm]
\dfrac{\eta}{1-(1-\eta)^{1-p}} & \eta \in (0,1).
\end{cases}
 }
\end{cor}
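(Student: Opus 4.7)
The statement is a direct specialization of Proposition~\ref{prop:genA}, so the plan is to set $\varphi(x):=x^{-p}$ on $(0,1]$ and verify that the hypotheses hold and that the right-hand side of \eq{lim} evaluates to the claimed closed form expressions.

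First I would check that $\varphi(x)=x^{-p}$ meets the standing assumptions of Proposition~\ref{prop:genA}: it is continuous on $(0,1]$ and monotone (strictly decreasing when $p>0$, constant when $p=0$, strictly increasing when $p<0$). The hypotheses $\Lambda_n\to\infty$ and $\lambda_n/\Lambda_n\to\eta\in[0,1)$ are assumed, so Proposition~\ref{prop:genA} applies and
\Eq{*}{
\lim_{n \to \infty} \sum_{k=1}^n \frac{\lambda_k}{\Lambda_n} \left(\frac{\Lambda_k}{\Lambda_n} \right)^{-p}
=\begin{cases}
\int_0^1 x^{-p}\,dx & \text{if }\eta=0,\\[1mm]
\sum_{k=0}^{\infty} \eta (1-\eta)^k \big((1-\eta)^k\big)^{-p} & \text{if }\eta\in(0,1).
\end{cases}
}

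Next I would evaluate the two right-hand sides. In the case $\eta=0$, the assumption $p<1$ ensures integrability near $0$ and yields
\Eq{*}{
\int_0^1 x^{-p}\,dx=\frac{1}{1-p},
}
which is the first branch. In the case $\eta\in(0,1)$, the series can be rewritten as a geometric series:
\Eq{*}{
\sum_{k=0}^{\infty}\eta(1-\eta)^k\big((1-\eta)^k\big)^{-p}
=\eta\sum_{k=0}^{\infty}\big((1-\eta)^{1-p}\big)^{k}.
}
Here $1-p>0$ and $1-\eta\in(0,1)$, so $(1-\eta)^{1-p}\in(0,1)$ and the geometric series sums to $\eta/\big(1-(1-\eta)^{1-p}\big)$, which is the second branch.

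There is essentially no obstacle here; the only role of the hypothesis $p<1$ is to guarantee both the finiteness of $\int_0^1 x^{-p}\,dx$ and the contractivity of the common ratio $(1-\eta)^{1-p}$ of the geometric series, so that the closed-form formulas on the right-hand side of \eq{lim} make sense and agree with the claimed values.
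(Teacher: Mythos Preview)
Your proposal is correct and follows exactly the approach of the paper, which simply states that the corollary follows by applying Proposition~\ref{prop:genA} to the power function $\varphi(x)=x^{-p}$. Your write-up merely makes explicit the routine verifications (continuity and monotonicity of $\varphi$, the elementary evaluation of the integral, and the summation of the geometric series) that the paper leaves implicit.
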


\begin{lem}\label{lem:F}
Let $f:\R_+\to\R$ be a concave function such that $\sign(f(x))=\sign(x-1)$ holds for all $x\in\R_+$ and the function $x\mapsto f(1/x)$ is integrable over $(0,1]$. Then the function $F$ given by
\Eq{*}{
  F(x,q):=\sum\limits_{k=0}^{\infty} q^k f\big(q^{-k}x\big) \qquad\big((x,q)\in\R_+\times(0,1))
}
is well-defined, continuous and nondecreasing in its first variable. Furthermore, for all fixed $q\in(0,1)$, the equation $F(x,q)=0$ has a unique solution $x(q)\in(0,1)$. The mapping $x(\cdot)$ so defined is continuous, and we have the following estimates:
\Eq{It}{
  \frac{q}{1-q} \int_{0}^{1/q}f\Big(\frac{x}{t}\Big) dt
  \leq F(x,q)\leq \frac{1}{1-q} \int_{0}^{1}f\Big(\frac{x}{t}\Big) dt \qquad (x,q)\in \R_+ \times(0,1).
}
\end{lem}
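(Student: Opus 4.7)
The plan is to establish monotonicity of $f$, then derive the integral bounds \eq{It} by a direct integral comparison (which simultaneously yields well-definedness of $F$), next argue joint continuity of $F$ in $(x,q)$, and finally deduce the existence, uniqueness, and continuity of $x(q)$. First I would show that $f$ is non-decreasing on $\R_+$: if $f$ had strictly negative right derivative at some point $x_0$, concavity would force $f(x)\to-\infty$ as $x\to\infty$, contradicting positivity of $f$ on $(1,\infty)$. Moreover, $f$ is strictly increasing on $(0,1]$, since any interval of constancy would, by concavity, force $f$ non-increasing further to the right, which is incompatible with the prescribed sign behavior at $1$.

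For fixed $(x,q)$ I would partition $(0,1/q]$ into the intervals $(q^k,q^{k-1}]$ for $k\geq 0$ (each of length $q^{k-1}(1-q)$) and use the non-increasingness of $t\mapsto f(x/t)$ to sandwich $\int_{q^k}^{q^{k-1}} f(x/t)\,dt$ between $q^{k-1}(1-q)\,f(xq^{-k+1})$ and $q^{k-1}(1-q)\,f(xq^{-k})$. Summing for $k\geq 1$ matches $(0,1]$ and gives the upper bound in \eq{It}; summing for $k\geq 0$ matches $(0,1/q]$ and gives the lower bound. The upper integral $\int_0^1 f(x/t)\,dt$ equals $x\int_x^\infty f(u)/u^2\,du$ by the substitution $u=x/t$, and is finite since the hypothesis translates (via $u=1/s$) to $\int_1^\infty f(u)/u^2\,du<\infty$, while the continuous $f$ is bounded on $[x,1]$. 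This delivers well-definedness of $F$ together with the bounds \eq{It} in one stroke.

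Monotonicity of $F(\cdot,q)$ is termwise. For joint continuity of $F$ I would write the tail of the series as $\sum_{k>N} q^k f(xq^{-k}) = q^{N+1} F(xq^{-N-1},q)$ and apply the upper bound in \eq{It} to dominate it by $\frac{x}{1-q}\int_{xq^{-N-1}}^\infty f(u)/u^2\,du$; on any compact set of the form $[a,b]\times[q_1,q_2]\subset\R_+\times(0,1)$ this decays uniformly as $N\to\infty$, and since the tail is nonnegative for $N$ large enough (all arguments $xq^{-k}$ eventually exceed $1$), the partial sums converge to $F$ uniformly on compacts, giving joint continuity. For existence of $x(q)$: $F(1,q)=f(1)+\sum_{k\geq 1} q^k f(q^{-k})>0$ since each $f(q^{-k})>0$, while the upper bound in \eq{It} combined with $f(0^+)<0$ yields $F(x,q)<0$ for $x$ sufficiently small (by monotone or dominated convergence in $\int_0^1 f(x/t)\,dt$). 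The intermediate value theorem then produces $x(q)\in(0,1)$, and strict monotonicity of $F(\cdot,q)$ on $(0,1]$, inherited from the $k=0$ summand $f(x)$, yields uniqueness. Finally, continuity of $x(\cdot)$ follows by a standard subsequence argument: if $q_n\to q_0$ and some subsequence $x(q_{n_k})\to x^*\in[0,1]$, joint continuity of $F$ forces $F(x^*,q_0)=0$, hence $x^*=x(q_0)$.

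I expect the principal technical obstacle to be establishing joint continuity of $F$: both the weights $q^k$ and the arguments $xq^{-k}$ depend on $q$, so a naive Weierstrass $M$-test is awkward. The key device is to recognize that the tail of the defining series is itself of the form $q^{N+1} F(\cdot,q)$, so that the already-derived integral bound \eq{It} immediately provides a tail estimate that is uniform in $(x,q)$ on compact subsets of $\R_+\times(0,1)$.
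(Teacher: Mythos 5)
Your proposal is correct in substance, but it is organized quite differently from the paper's proof, and the differences are worth recording. You derive the two-sided estimate \eq{It} first, by the same per-interval comparison on $(q^k,q^{k-1}]$ that the paper encodes in \eq{Iv}, and then you reuse it twice: once to get well-definedness of $F$ (together with the substitution $u=x/t$ translating the hypothesis into $\int_1^\infty f(u)u^{-2}du<\infty$), and once---via the self-similarity $\sum_{k>N}q^kf(xq^{-k})=q^{N+1}F(xq^{-N-1},q)$---to get a tail bound $\frac{x}{1-q}\int_{xq^{-N-1}}^\infty f(u)u^{-2}du$ that is uniform on compact boxes $[a,b]\times[q_1,q_2]$, whence uniform convergence and joint continuity. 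The paper instead proves continuity by a Weierstrass $M$-test, dominating the $k$-th term by a constant multiple of the corresponding term at the corner $(x_*,q^*)$ through the concavity-based slope inequality \eq{Iu}, and only assembles \eq{It} at the very end; your route avoids the slope lemma altogether and is arguably cleaner, at the price of needing the integral bound before continuity. For the sign change you use the upper bound in \eq{It} plus monotone convergence of $\int_0^1 f(x/t)\,dt$ to $f(0^+)<0$ as $x\to0^+$, whereas the paper evaluates $F(q^n,q)<f(q)+q^nF(1,q)$ directly; both work, and your version has the added benefit that the threshold below which $F(x,q)<0$ can be chosen independently of $q$ (since $\tfrac1{1-q}\ge1$ multiplies a negative quantity). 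One loose end: in your subsequence argument for the continuity of $x(\cdot)$ you allow $x^*\in[0,1]$, but $F$ is not defined at $x=0$, so the case $x(q_{n_k})\to0$ must be excluded separately; this is immediate from the uniform negativity just mentioned (so $x(q)$ is bounded away from $0$), but it should be said. The paper sidesteps this entirely with a direct $\varepsilon$--$\delta$ argument using strict monotonicity of $F(\cdot,q)$ near $x(q_0)$. Also, for uniqueness make explicit that $F(x,q)\ge F(1,q)>0$ for $x\ge1$ rules out zeros outside $(0,1)$; with that remark your argument is complete.
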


\begin{proof}
By elementary considerations, it follows from the concavity and the sign properties that $f$ is nondecreasing on $\R_+$ and is strictly increasing on $(0,1)$, furthermore, it is also continuous. It also follows from the concavity that the map
\Eq{*}{
  u\mapsto\frac{f(u)-f(1)}{u-1}=\frac{f(u)}{u-1}
}
is nonincreasing on $\R_+\setminus\{1\}$. Therefore, if $1<u_0\leq u$, then
\Eq{Iu}{
  \frac{f(u)}{u}\leq\frac{f(u)}{u-1}\leq\frac{f(u_0)}{u_0-1}.
}

To show that $F$ is continuous, let $(x_0,q_0)\in\R_+\times(0,1)$ be fixed. The product $q_0^{-k}x$ is bigger than $1$ for $k\geq k_0:=1+\ceil{\log(x_0)/\log(q_0)}$. Therefore, there exist $0<x_*<x_0<x^*$ and $0<q_*<q_0<q^*<1$ such that $q^{-k}x>1$ for all $(x,q) \in V:=[x_*,x^*]\times[q_*,q^*]$ and $k\geq k_0$. The expression $\sum_{k=0}^{k_0-1} q^k f(q^{-k} x)$ being a finite sum of continuous functions is obviously continuous at $(x_0,q_0)$. Therefore, it suffices to show that tail sum
\Eq{*}{
  F_{k_0}(x,q):=\sum_{k=k_0}^{\infty} q^k f(q^{-k} x)
}
is also continuous at $(x_0,q_0)$. By the choice of $k_0$, each term is positive  for $(x,q)\in V$. By the nondecreasingness of $f$, for all $k\geq0$ and $(x,q) \in \R_+\times(0,1)$, we clearly have
\Eq{Iv}{
q^k f(q^{-k}x) 
&= \frac{1}{1-q} \int_{q^{k+1}}^{q^{k}} f\Big(\frac{x}{q^k}\Big) dt
\le \frac{1}{1-q} \int_{q^{k+1}}^{q^{k}} f\Big(\frac{x}{t}\Big) dt,\\
q^k f(q^{-k}x) 
&= \frac{q}{1-q} \int_{q^{k}}^{q^{k-1}} f\Big(\frac{x}{q^k}\Big) dt
\ge \frac{q}{1-q} \int_{q^{k}}^{q^{k-1}} f\Big(\frac{x}{t}\Big) dt.
}
Summarizing the first inequality for $k\geq k_0$, it follows that
\Eq{*}{
\sum_{k=k_0}^\infty q^k f(q^{-k}x) \le \frac{1}{1-q} \int_{0}^{q^{k_0}} f\Big(\frac{x}{t}\Big) dt < +\infty \qquad (x,q) \in V,
}
which implies that the series on the left hand side is convergent. To prove that the sum of this series (i.e., $F_{k_0}(x,q)$) is a continuous function of $(x,q)$ at $(x_0,q_0)$, it suffices to show that the convergence is uniform over $V$. 

Observe that, for $k\geq k_0$ and $(x,q)\in V$, we have
\Eq{*}{
  u_0:=\frac{x_*}{(q^*)^{k}}\leq \frac{x}{q^{k}}:=u. 
}
Now using the inequality \eq{Iu} for the above $u_0$ and $u$, it follows that
\Eq{*}{
  q^k f(q^{-k}x)&=x\frac{f(u)}{u}
  \leq x^*\frac{f(u_0)}{u_0-1}\\
  &=\frac{x^*}{x_*-(q^*)^{k}}\cdot(q^*)^kf\big((q^*)^{-k}x_*\big)
  \leq\frac{x^*}{x_*-(q^*)^{k_0}}\cdot(q^*)^kf\big((q^*)^{-k}x_*\big).
}
This inequality shows that, for $(x,q)\in V$ and $k\geq k_0$, the $k$th term of the series corresponding to $F_{k_0}(x,q)$ is majorized by a constant multiple of the corresponding term of the series for $F_{k_0}(x_*,q^*)$. Thus, in view of the Weierstrass $M$-test, the convergence of the series corresponding to $F_{k_0}(x,q)$ is uniform. By the continuity of each term of this series, it follows that the sum function is also continuous at $(x_0,q_0)$.

Thus, we have proved that $F$ is a well-defined continuous function on $\R_+ \times (0,1)$. Moreover, as $f$ is nondecreasing on $(1,\infty)$ and strictly increasing on $(0,1)$, we obtain that $F(\cdot,q)$ is nondecreasing on $(1,\infty)$ and strictly increasing on $(0,1)$ (as all terms of the sum are nondecreasing and the very first of them is strictly increasing on $(0,1)$).

Finally, in order to prove that $F(\cdot,q)$ has a unique zero note that $f(q^n) < f(q)$, and $f(q^k) < 0$ for all $k \in \{1,\dots,n-1\}$. Thus we get
\Eq{*}{
F(q^n,q) = f(q^n)+\cdots+q^{n-1}f(q)+q^n F(1,q)< f(q)+q^n F(1,q).
}
Therefore, for large $n$, we have that $F(q^n,q)<0$. This, with the easy-to-see inequality $F(1,q)>0$, implies that for all $q \in (0,1)$, the equality $F(x,q)=0$ has a solution $x=x(q) \in (0,1)$. 

To show that $x(q)$ depends continuously on $q$, let $q_0\in(0,1)$ be fixed and $0<\varepsilon<\min(x(q_0),1-x(q_0))$. Since $F(x(q_0),q_0)=0$, we have that 
\Eq{*}{
 F(x(q_0)-\varepsilon,q_0)<0<F(x(q_0)+\varepsilon,q_0).
}
By the continuity of $F$, there exists $0<\delta<\min(q_0,1-q_0)$ such that, for all $q\in(q_0-\delta,q_0+\delta)$, 
\Eq{*}{
 F(x(q_0)-\varepsilon,q)<0<F(x(q_0)+\varepsilon,q).
}
Therefore, the uniquely defined value $x(q)$ must be between $x(q_0)-\varepsilon$ and $x(q_0)+\varepsilon$, that is, $|x(q)-x(q_0)|<\varepsilon$ for all $q\in(q_0-\delta,q_0+\delta)$.

Finally, if we sum up (both) inequalities side by side in \eq{Iv} for all $k \in \{0,1,\dots\}$, we easily obtain \eq{It}.
\end{proof}

\begin{prop}\label{prop:CEf}
 Let $\lambda \in W_0$ with $\Lambda_n\to\infty$, $\lambda_n/\Lambda_n \to \eta\in[0,1)$ and let $f:\R_+\to\R$ be a concave function such that $\sign(f(x))=\sign(x-1)$ holds for all $x\in\R_+$ and the function $x\mapsto f(1/x)$ is integrable over $(0,1]$. Then $c:=\Est[\lambda]{\E_f}$ is the unique solution of the equation 
\Eq{E:CEf}{
\int_0^1 f\Big(\frac1{cx}\Big)\:dx=0 &\qquad \text{ for }\eta=0,\\
\sum\limits_{k=0}^{\infty} (1-\eta)^k f\Big(\frac{1}{c(1-\eta)^{k}}\Big)=0 &\qquad \text{ for } \eta>0.
}
\end{prop}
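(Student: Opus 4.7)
The plan is to translate the Kedlaya-type quantity $c:=\Est[\lambda]{\E_f}$ into an implicit limit, identify that limit via Proposition~\ref{prop:genA}, and use monotonicity to match it with the unique zero of a naturally associated function. Since $\E_f$ is homogeneous (Lemma~\ref{lem:MIA2.3}), the $y$-supremum in Corollary~\ref{cor:HardyconstantforKedlaya} is vacuous, so $c=\liminf_{n\to\infty}y_n$, where
$$y_n:=\E_f\big((\Lambda_n/\Lambda_k)_{k=1}^n,(\lambda_k)_{k=1}^n\big).$$
By the defining relation of $\E_f$ with quasideviation $E(x,y)=f(x/y)$, the value $y_n$ is the unique positive number satisfying
$$\sum_{k=1}^n\frac{\lambda_k}{\Lambda_n}\,f\!\left(\frac{1}{y_n\cdot\Lambda_k/\Lambda_n}\right)=0.$$

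For every fixed $y>0$, the function $\varphi_y(u):=f(1/(yu))$ is continuous and nonincreasing on $(0,1]$, so Proposition~\ref{prop:genA} yields
$$G(y):=\lim_{n\to\infty}\sum_{k=1}^n\frac{\lambda_k}{\Lambda_n}\,\varphi_y\!\left(\frac{\Lambda_k}{\Lambda_n}\right)=\begin{cases}\displaystyle\int_0^1 f\!\left(\frac{1}{yx}\right)dx, & \eta=0,\\[6pt]\displaystyle\sum_{k=0}^\infty \eta(1-\eta)^k f\!\left(\frac{1}{y(1-\eta)^k}\right), & \eta\in(0,1).\end{cases}$$
The limits are finite by the integrability assumption on $x\mapsto f(1/x)$ when $\eta=0$, and by Lemma~\ref{lem:F} applied with $x=1/y$ and $q=1-\eta$ when $\eta>0$.

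Next I would verify that $G$ has a unique positive zero $c^*\in(1,\infty)$ and that it coincides with the quantity appearing in \eq{E:CEf}. The function $G$ is nonincreasing (since $y\mapsto 1/(yu)$ is decreasing and $f$ is nondecreasing), and $G(1)>0$ because $\varphi_1\ge 0$ with strict inequality on $(0,1)$. For $\eta>0$, writing $G(y)=\eta F(1/y,1-\eta)$ with $F$ from Lemma~\ref{lem:F}, both existence and uniqueness of $c^*$ are inherited from that lemma, and dividing $G(c^*)=0$ by $\eta$ recovers \eq{E:CEf}. For $\eta=0$, existence follows from continuity of $G$ together with the identity $G(y)=\frac{1}{y}\int_0^y f(1/v)\,dv$, whose denominator-normalized numerator tends to $-\infty$ as $y\to\infty$ because $\int_1^y f(1/v)\,dv\to-\infty$. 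Uniqueness: if $G(c^*)=G(c')=0$ with $c^*<c'$, monotonicity forces $G\equiv 0$ on $[c^*,c']$, hence the integral of the pointwise nonnegative continuous function $f(1/(c^*u))-f(1/(c'u))$ vanishes, so this function is identically zero on $(0,1]$; setting $u=1/c^*\in(0,1)$ yields $0=f(1)=f(c^*/c')<0$, a contradiction.

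Finally I would transport this to $(y_n)$. Fix $\varepsilon>0$; by the monotonicity and uniqueness just established, $G(c^*-\varepsilon)>0>G(c^*+\varepsilon)$. The convergence from Proposition~\ref{prop:genA} then gives, for all sufficiently large $n$, the same strict inequalities for the finite sums $\sum_{k=1}^n\frac{\lambda_k}{\Lambda_n}f(1/((c^*\pm\varepsilon)\Lambda_k/\Lambda_n))$. Since this sum is nonincreasing in the parameter $y$ and vanishes at $y=y_n$, I obtain $c^*-\varepsilon<y_n<c^*+\varepsilon$, so $y_n\to c^*$ and $c=c^*$. The main obstacle is precisely this last transfer from pointwise-in-$y$ asymptotics of finite sums to convergence of the implicit sequence $(y_n)$; the monotonicity in the parameter $y$, inherited from $f$ being nondecreasing, is what lets a simple bracketing argument replace what would otherwise require a uniform-convergence hypothesis.
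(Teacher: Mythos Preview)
Your proof is correct and rests on the same pillars as the paper's: homogeneity of $\E_f$ to reduce $\Est[\lambda]{\E_f}$ to $\liminf_n y_n$, Proposition~\ref{prop:genA} to identify limits of the weighted sums $\sum_k\frac{\lambda_k}{\Lambda_n}\varphi_y(\Lambda_k/\Lambda_n)$, and Lemma~\ref{lem:F} for the series case. The only organizational difference is the direction of the squeeze: the paper starts from $c=\liminf_n y_n$, translates $K<c<L$ into sign inequalities for the finite sums, passes to the limit via Proposition~\ref{prop:genA}, and then lets $K\nearrow c$, $L\searrow c$ to conclude that $c$ solves \eqref{E:CEf}; you instead first locate the unique zero $c^*$ of $G$ and then use the strict signs $G(c^*-\varepsilon)>0>G(c^*+\varepsilon)$ together with the monotonicity of the finite sums in $y$ to bracket $y_n$, obtaining the slightly stronger conclusion $y_n\to c^*$. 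Both arguments are equivalent in substance. A minor bonus in your write-up is the self-contained uniqueness argument for $\eta=0$ (the paper cites \cite[Theorem~3.4]{PalPas18a} instead).
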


\begin{proof}
The first equation in \eq{E:CEf} is equivalent to
\Eq{*}{
  \int_0^c f\Big(\frac1{x}\Big)\:dx=0,
}
which, by \cite[Theorem 3.4]{PalPas18a} has a unique solution $c$ in the interval $(1,\infty)$. On the other hand, putting $q:=1-\eta$, the second equation in \eq{E:CEf} is equivalent to the $F(1/c,1-\eta)=0$, which, according to Lemma~\ref{lem:F}, also has a unique solution in the interval $(1,\infty)$.

Fix any $K\in(0,c)$. Then there exists $n_K \in \N$ such that
\Eq{*}{
 K<\Ef\limits_{k=1}^n\Big(\frac{\Lambda_n}{\Lambda_k},\lambda_k \Big) \qquad \text{ for all }n>n_K\:.
 }
Equivalently,
\Eq{*}{
0<\sum_{k=1}^n \frac{\lambda_k}{\Lambda_n} f\Big( \frac{\Lambda_n}{K\Lambda_k}\Big) \qquad \text{ for all }n>n_K.
}
Then, with $\varphi_K(x):=f(\tfrac1{Kx})$, we have
\Eq{*}{
0<\sum_{k=1}^n \frac{\lambda_k}{\Lambda_n} \varphi_K\Big( \frac{\Lambda_k}{\Lambda_n}\Big) \qquad \text{ for all }K \in (0,c) \text{ and }n\ge n_K.
}
Observe that $\varphi_K$ is a nonincreasing, continuous and integrable function on $(0,1]$, therefore upon taking the limit $n\to\infty$ and using Proposition~\ref{prop:genA}, it follows that 
\Eq{KK}{
0\leq
\begin{cases}
\int\limits_0^1 \varphi_K(x) dx & \mbox{if } \eta=0, \\[3mm] 
\sum\limits_{k=0}^{\infty} \eta (1-\eta)^k \varphi_K\big((1-\eta)^k\big) & \mbox{if }\eta \in (0,1).
\end{cases}
}

Similarly, for all $L \in (c,+\infty)$, there exist a sequence of integers $n_i\to\infty$, such that, for all $i\in\N$,
\Eq{*}{
0>\sum_{k=1}^{n_i} \frac{\lambda_k}{\Lambda_{n_i}} \varphi_L\Big( \frac{\Lambda_k}{\Lambda_{n_i}}\Big).
}
Upon taking the limit $i\to\infty$ and again using Proposition~\ref{prop:genA}, we obtain that
\Eq{LL}{
0\geq
\begin{cases}
\int\limits_0^1 \varphi_L(x) dx & \mbox{if } \eta=0, \\[3mm] 
\sum\limits_{k=0}^{\infty} \eta (1-\eta)^k \varphi_L\big((1-\eta)^k\big) & \mbox{if }\eta \in (0,1).
\end{cases}
}
Combining the first inequalities from \eq{KK} and \eq{LL}, in the case $\eta=0$, we get
\Eq{*}{
  \int\limits_0^L f\Big(\frac1{x}\Big) dx 
  =\int\limits_0^1 f\Big(\frac1{Lx}\Big) dx 
  \leq0\leq \int\limits_0^1 f\Big(\frac1{Kx}\Big) dx
  =\int\limits_0^K f\Big(\frac1{x}\Big) dx ,
}
while, for $\eta\in(0,1)$, we obtain
\Eq{*}{
\eta F(L^{-1},1-\eta)
&=\sum\limits_{k=0}^{\infty} \eta (1-\eta)^k f\Big(\frac1{L(1-\eta)^k}\Big)\\
  &\leq0\leq\sum\limits_{k=0}^{\infty} \eta (1-\eta)^k f\Big(\frac1{K(1-\eta)^k}\Big)=\eta F(K^{-1},1-\eta).
}
If we now take the common limits $K \nearrow c$ and $L \searrow c$, and we use the continuity of $F$ established in Lemma~\ref{lem:F}, we get \eq{E:CEf}.
\end{proof}

\section{Applications}
Now we are going to present some weighted Hardy constants for quasiarithmetic means. It is well known that for $\pi_p(x):=x^p$ if $p\ne 0$ and $\pi_0(x):=\ln x$ equality $\QA{\pi_p}=\P_p$ holds. Furthermore, the comparability problem within this family can be (under natural smoothness assumptions) boiled down to pointwise comparability of the mapping $f\mapsto \frac{f''}{f'}$ (cf.\ \cite{HarLitPol34}). More precisely, we have

\begin{prop}\label{Mik}
Let $I \subset \R$ be an interval, $f,\,g \colon I \to \R$ be twice differentiable functions having nowhere vanishing  
first derivatives. Then the following two conditions are equivalent
 \begin{itemize}
 \item[\upshape{(i)}] $\QA{f}(x_1,\dots,x_n)\leq\QA{g}(x_1,\dots,x_n)$ 
 for all $n\in\N$ and vector $(x_1,\dots,x_n)\in I^n$;
 \item[\upshape{(ii)}] $\frac{f''(x)}{f'(x)}\leq \frac{g''(x)}{g'(x)}$ for all $x\in I$.
 \end{itemize}
\end{prop}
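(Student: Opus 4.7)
The plan is to reduce Proposition~\ref{Mik} to the classical fact that a $C^2$ function is convex if and only if its second derivative is nonnegative, via the auxiliary function $h:=g\circ f^{-1}\colon f(I)\to\R$.

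First, I would normalize to the case $f'>0$ and $g'>0$ on $I$. Both conditions (i) and (ii) are invariant under substitutions $f\mapsto \alpha f+\beta$ with $\alpha\ne0$: from \eq{QA} one has $\QA{\alpha f+\beta}=\QA{f}$, and the ratio $f''/f'$ is unchanged under such affine transformations. Since $f'$ is continuous and nowhere vanishing on the interval $I$, it has constant sign; replacing $f$ by $-f$ and $g$ by $-g$ if necessary, I may assume both are strictly increasing.

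Next, a direct chain-rule computation, using $(f^{-1})'(y)=1/f'(f^{-1}(y))$, gives at $y=f(x)$
\Eq{*}{
h''(y)=\frac{g'(x)}{f'(x)^2}\left(\frac{g''(x)}{g'(x)}-\frac{f''(x)}{f'(x)}\right).
}
With $f'(x),g'(x)>0$, nonnegativity of $h''$ on $f(I)$ is equivalent to $f''/f'\leq g''/g'$ on $I$, which (since $h$ is $C^2$) is in turn equivalent to $h$ being convex on $f(I)$. Thus (ii) is equivalent to the convexity of $h$.

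Finally, for the equivalence of (i) with the convexity of $h$, observe that since $g$ is strictly increasing, (i) is equivalent to $g(\QA{f}(x_1,\dots,x_n))\leq g(\QA{g}(x_1,\dots,x_n))$ for all admissible inputs. The right-hand side simplifies to $\tfrac1n\sum_{i=1}^n g(x_i)$, and the left-hand side equals $h\bigl(\tfrac1n\sum_{i=1}^n f(x_i)\bigr)$. Setting $y_i:=f(x_i)$, which ranges freely over $f(I)^n$, condition (i) becomes Jensen's inequality for $h$ on $f(I)$. Convexity of $h$ gives this inequality; conversely, the $n=2$ case alone yields midpoint convexity, which upgrades to full convexity by the continuity of $h$. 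The argument is essentially Mikusi\'nski's classical theorem, and the only subtle points---neither presenting a real obstacle---are the sign-tracking in the normalization step and the passage from midpoint convexity to convexity in the last step.
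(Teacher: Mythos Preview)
Your argument is correct and is the classical Mikusi\'nski-style proof: reduce to showing that $h=g\circ f^{-1}$ is convex via the chain-rule identity $h''(f(x))=\dfrac{g'(x)}{f'(x)^2}\Bigl(\dfrac{g''(x)}{g'(x)}-\dfrac{f''(x)}{f'(x)}\Bigr)$, then identify (i) with Jensen's inequality for $h$. The normalization to $f',g'>0$ and the final midpoint-convexity-to-convexity step are handled properly.

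As for comparison with the paper: Proposition~\ref{Mik} is not proved there; it is stated as a classical result with a reference to Hardy--Littlewood--P\'olya \cite{HarLitPol34}. So there is no ``paper's own proof'' to compare against. Your write-up supplies exactly the standard argument one would expect behind that citation, and the label \texttt{Mik} in the paper indeed alludes to Mikusi\'nski's characterization, which is what you have reproduced.
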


In a special case $I\subseteq\R_+$ condition (ii) can be equivalently written as
\Eq{*}{
  \chi_f(x):=\frac{xf''(x)}{f'(x)}+1\leq\frac{xg''(x)}{g'(x)}+1=:\chi_g(x) \qquad(x\in I).
}
It is easy to verify that the equality $\chi_{\pi_p}\equiv p$ holds for 
all $p\in\R$. Therefore, in view of Proposition~\ref{Mik}, we have
\Eq{*}{
\P_q=\QA{\pi_q} \le \QA{f} \le \QA{\pi_p}=\P_p,
}
where $q:=\inf_I\chi_f$ and $p:=\sup_I\chi_f$, moreover 
these parameters are sharp. In other words, the operator $\chi_{(\cdot)}$ could be applied to embed quasiarithmetic means into the scale of power means (cf. \cite{Pas13}). 

This fact will be used to establish some weighted Hardy constants for quasiarithmetic means. Our main idea is to compare a quasiarithmetic mean with a suitable power mean. As a matter of fact, this is not so restrictive as it seams to be at first glance. Namely, Mulholland \cite{Mul32} proved that a quasiarithmetic mean is Hardy if and only if it is majorized up to a constant number by some power mean with parameter strictly smaller than one. Throughout this section, we will use the already introduced notation $\Lambda_n:=\lambda_1+\cdots+\lambda_n$.

\begin{prop}\label{prop:QA}
Let $(\lambda_n)\in W_0$ such that $\Lambda_n\to\infty$ and $\lim_{n \to \infty} \lambda_n/\Lambda_n=:\eta$ exists. Let $f \colon \R_+ \to \R$ be a twice continuously differentiable function with a nonvanishing first derivative and define 
\Eq{*}{
q:= \liminf_{x \to 0^+} \chi_f(x) \le \limsup_{x \to 0^+} \chi_f(x) =:p.
}
Assume that $p<1$. Then, for all $x \in \R_+$,
\Eq{CC}{
C(q,\eta)
\le \liminf_{n \to \infty} \frac{\Lambda_n}{x} \QA{f} &\bigg(\Big(\frac{x}{\Lambda_1},\frac{x}{\Lambda_2},\dots,\frac{x}{\Lambda_n}\Big),(\lambda_1,\dots,\lambda_n)\bigg)\\
\le \limsup_{n \to \infty} \frac{\Lambda_n}{x} \QA{f} &\bigg(\Big(\frac{x}{\Lambda_1},\frac{x}{\Lambda_2},\dots,\frac{x}{\Lambda_n}\Big),(\lambda_1,\dots,\lambda_n)\bigg)
\le C(p,\eta),
}
where the function $C:(-\infty,1)\times[0,1)\to\R$ is defined by
\Eq{defC}{
C(r,\eta):=\begin{cases}
\left( \dfrac{\eta}{1-(1-\eta)^{1-r}} \right) ^{1/r}, &\quad \eta \in (0,1) \text{ and } r \ne 0;\\[4mm]
(1-\eta)^{1-1/\eta}, &\quad \eta \in (0,1) \text{ and } r = 0;\\[1mm]
(1-r) ^{-1/r}, &\quad \eta = 0 \text{ and } r \ne 0; \\
e, &\quad \eta = 0 \text{ and } r = 0.
\end{cases}
}
\end{prop}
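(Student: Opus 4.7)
The plan is to sandwich $\QA{f}$ between the two power means $\P_{q-\varepsilon}$ and $\P_{p+\varepsilon}$ on a small one-sided neighborhood of zero, and then compute the asymptotics of the power-mean expressions directly via Corollary~\ref{cor:A} (and Proposition~\ref{prop:genA} for the log/geometric case). First I would fix $\varepsilon>0$ with $p+\varepsilon<1$ and use the definitions of $q,p$ to produce $\delta>0$ with $q-\varepsilon\le\chi_f(t)\le p+\varepsilon$ for all $t\in(0,\delta)$. Since $\chi_{\pi_r}\equiv r$, Proposition~\ref{Mik} (in its standard weighted extension, applied on the interval $(0,\delta)$) then furnishes
\Eq{*}{
\P_{q-\varepsilon}(u,\mu)\le \QA{f}(u,\mu)\le \P_{p+\varepsilon}(u,\mu)
}
for every $m\in\N$, $u\in(0,\delta)^m$, and $\mu\in W_m$.

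The catch is that for arbitrary $x>0$ the initial entries $y_k:=x/\Lambda_k$ may lie outside $(0,\delta)$. I would sidestep this by fixing $k_0\in\N$ with $x/\Lambda_{k_0+1}<\delta$ and replacing the first $k_0$ entries by the common value $x/\Lambda_{k_0+1}$, yielding a modified input $\hat y\in(0,\delta)^n$ with $\hat y\le y$ coordinatewise; set $z_n:=\QA{f}(y,\lambda)$ and $\hat z_n:=\QA{f}(\hat y,\lambda)$. The sandwich above brackets $\hat z_n$ between $\P_{q-\varepsilon}(\hat y,\lambda)$ and $\P_{p+\varepsilon}(\hat y,\lambda)$. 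A direct computation of $(\Lambda_n/x)\P_r(\hat y,\lambda)$---using Corollary~\ref{cor:A} when $r\ne 0$, or Proposition~\ref{prop:genA} with $\varphi(t)=-\log t$ when $r=0$, and noting that the contribution of the constant-valued initial block is of order $\Lambda_n^{r-1}\to 0$---shows that $(\Lambda_n/x)\P_r(\hat y,\lambda)\to C(r,\eta)$ for $r\in\{q-\varepsilon,\,p+\varepsilon\}$. Consequently $C(q-\varepsilon,\eta)\le\liminf_n\tfrac{\Lambda_n}{x}\hat z_n\le\limsup_n\tfrac{\Lambda_n}{x}\hat z_n\le C(p+\varepsilon,\eta)$.

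The remaining step, which is also the main obstacle, is to transfer this bound from $\hat z_n$ to $z_n$. From the defining equations of $\QA{f}$,
\Eq{*}{
\Lambda_n\bigl(f(z_n)-f(\hat z_n)\bigr)=\sum_{k=1}^{k_0}\lambda_k\bigl(f(y_k)-f(\hat y_k)\bigr)=:D,
}
a constant independent of $n$, so $f(z_n)-f(\hat z_n)=D/\Lambda_n\to 0$; together with $\hat z_n\to 0$ (from the preceding paragraph) and the continuity of $f^{-1}$, this forces $z_n\to 0$, and for large $n$ both $z_n,\hat z_n$ lie in $(0,\delta)$. I would then integrate the identity $(\log|f'|)'(t)=(\chi_f(t)-1)/t$ across $(0,\delta)$ to extract the pointwise lower bound $|f'(t)|\ge c\,t^{p+\varepsilon-1}$ and apply the Mean Value Theorem to conclude
\Eq{*}{
\frac{\Lambda_n}{x}\,|z_n-\hat z_n|\le \frac{|D|}{cx}\,\max(z_n,\hat z_n)^{1-p-\varepsilon}\longrightarrow 0,
}
the decisive point being the strict positivity of the exponent $1-p-\varepsilon$. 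This identifies $\liminf$ and $\limsup$ of $(\Lambda_n/x)z_n$ with those of $(\Lambda_n/x)\hat z_n$, and sending $\varepsilon\to 0^+$ (by the elementary continuity of $C(\cdot,\eta)$ on $(-\infty,1)$) produces \eq{CC}. The hard part is precisely this transfer: the naive MVT is insufficient, and one must exploit the hypothesis $p<1$ through the sharp derivative bound coming from $\chi_f\le p+\varepsilon<1$ to obtain a decay fast enough to survive multiplication by $\Lambda_n$.
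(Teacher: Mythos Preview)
Your argument is correct, and the overall strategy coincides with the paper's: compare $\QA{f}$ to a nearby power mean $\P_r$ (with $r$ just beyond $p$, resp.\ just below $q$), compute the limit of the power-mean expression via Corollary~\ref{cor:A} (or Proposition~\ref{prop:genA} for $r=0$), and then pass $r\to p$ (resp.\ $r\to q$) using the continuity of $C(\cdot,\eta)$. The paper's proof is essentially a three-line version of this.

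The one genuine difference lies in how the comparison with $\P_r$ is obtained. The paper does \emph{not} argue pointwise; it simply invokes \cite[Theorem~3.1]{PalPas19a} to assert directly that
\[
\limsup_{n\to\infty}\frac{\Lambda_n}{x}\,\QA{f}(y,\lambda)\le \limsup_{n\to\infty}\frac{\Lambda_n}{x}\,\P_r(y,\lambda)\qquad(r\in(p,1)\setminus\{0\}),
\]
thereby sidestepping the whole issue of the initial entries $y_1,\dots,y_{k_0}$ lying outside a small neighborhood of $0$. Your proof, by contrast, is self-contained: you restrict the pointwise comparison from Proposition~\ref{Mik} to $(0,\delta)$, patch the first $k_0$ coordinates to form $\hat y$, bracket $\hat z_n$, and then transfer the bounds to $z_n$ via the identity $\Lambda_n(f(z_n)-f(\hat z_n))=D$ together with the lower bound $|f'(t)|\ge c\,t^{p+\varepsilon-1}$ obtained from $\chi_f\le p+\varepsilon$. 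This transfer step is the substantive addition over the paper's outline; it is precisely here that the hypothesis $p<1$ is used quantitatively (the exponent $1-p-\varepsilon>0$ makes $(\Lambda_n/x)|z_n-\hat z_n|\to 0$). What you gain is a proof that does not depend on an external reference; what the paper gains is brevity. Both routes end at the same place and for the same reason.
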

\begin{proof}
It is elementary to see that $C$ is a continuous function which is strictly increasing in its first variable.

Following the lines of proof of \cite[Theorem 3.1]{PalPas19a} we get that for all $r \in (p,1) \setminus \{0\}$,
\Eq{*}{
U:= \limsup_{n \to \infty} \frac{\Lambda_n}{x} \cdot \QA{f} \bigg(\Big(\frac{x}{\Lambda_1}&,\frac{x}{\Lambda_2},\dots,\frac{x}{\Lambda_n}\Big),(\lambda_1,\dots,\lambda_n)\bigg) \\
&\le \limsup_{n \to \infty} \frac{\Lambda_n}{x} \cdot \P_r \bigg(\Big(\frac{x}{\Lambda_1},\frac{x}{\Lambda_2},\dots,\frac{x}{\Lambda_n}\Big),(\lambda_1,\dots,\lambda_n)\bigg).
}
Therefore, as $\P_r$ is homogeneous, we obtain
\Eq{*}{
U \le \limsup_{n \to \infty}  \P_r \bigg(\Big(\frac{\Lambda_n}{\Lambda_1},\frac{\Lambda_n}{\Lambda_2},\dots,\frac{\Lambda_n}{\Lambda_n}\Big),(\lambda_1,\dots,\lambda_n)\bigg)
= \limsup_{n \to \infty}  \left(\sum_{k=1}^n \frac{\lambda_k}{\Lambda_n} \cdot \left(\frac{\Lambda_k}{\Lambda_n} \right)^{-r} \right)^{1/r}.
}
Thus, due to Corollary~\ref{cor:A}, we obtain $U \le C(r,\eta)$ for $r \in (p,1)\setminus\{0\}$. Now, as the function $C$ is continuous, we can pass the limit $r \searrow p$ and obtain $U \le C(p,\eta)$. The verification of the left hand side inequality in \eq{CC} is completely analogous.
\end{proof}

We will now establish some $\lambda$-Hardy constants in a family of quasiarithmetic means.

\begin{cor}\label{cor:QA}
Let $(\lambda_n)\in W_0$ such that $\Lambda_n\to\infty$ and $\big(\tfrac{\lambda_n}{\Lambda_n}\big)_{n=1}^{\infty}$ is nonincreasing with a limit $\eta\in[0,1)$. Let $f \colon \R_+ \to \R$ be a twice continuously differentiable function with a nonvanishing first derivative, such that the limit 
\Eq{*}{
p:= \lim_{x \to 0^+} \chi_f(x)
}
exists, is smaller than 1, and $\chi_f(x) \le p$ for all $x \in \R_+$. Then $\Hc[\lambda]{\QA{f}}=C(p,\eta)$, where the function $C$ was defined by \eq{defC}.
\end{cor}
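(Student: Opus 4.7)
The plan is to sandwich $\Hc[\lambda]{\QA{f}}$ between $C(p,\eta)$ on both sides. The upper estimate will come from comparing $\QA{f}$ with the weighted power mean $\P_p$, whose Hardy constant is computable through Proposition~\ref{prop:BJ}; the lower estimate will be essentially immediate from Proposition~\ref{prop:QA} combined with the liminf-type lower bound in Corollary~\ref{cor:HardyconstantforKedlaya}. The technical core sits in the upper bound, where $\Hc[\lambda]{\P_p}$ has to be matched with the explicit expression $C(p,\eta)$.

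For the upper bound, I would use $\chi_f(x)\le p=\chi_{\pi_p}(x)$ for all $x\in\R_+$ together with Proposition~\ref{Mik} (the comparison extends routinely from the unweighted to the weighted setting via duplication of entries and continuity of the mean in the weights) to obtain the pointwise inequality $\QA{f}(x,\lambda)\le\P_p(x,\lambda)$. Since the functional $\Hc[\lambda]{\cdot}$ is trivially monotone with respect to pointwise comparison of means, this yields $\Hc[\lambda]{\QA{f}}\le\Hc[\lambda]{\P_p}$. For $p<1$, the mean $\P_p$ is symmetric, monotone, homogeneous, Jensen concave, and continuous in the weights, and since $\sum\lambda_n=\infty$, Proposition~\ref{prop:BJ} yields $\Hc[\lambda]{\P_p}=\Est[\lambda]{\P_p}$. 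Homogeneity of $\P_p$ makes the supremum in the definition of $\Est[\lambda]{\P_p}$ irrelevant, reducing it to $\liminf_n\P_p\bigl((\Lambda_n/\Lambda_k)_{k=1}^n,\lambda\bigr)$. For $p\ne 0$ this equals
\[
\biggl(\lim_{n\to\infty}\sum_{k=1}^n\frac{\lambda_k}{\Lambda_n}\Bigl(\frac{\Lambda_k}{\Lambda_n}\Bigr)^{-p}\biggr)^{1/p}=C(p,\eta)
\]
by Corollary~\ref{cor:A}. The case $p=0$ is handled by the analogous computation with $\varphi(x)=-\log x$ plugged into Proposition~\ref{prop:genA}; using $\sum_{k\ge 0}k(1-\eta)^k=(1-\eta)/\eta^2$ for $\eta>0$ then reproduces the value $C(0,\eta)=(1-\eta)^{1-1/\eta}$.

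For the lower bound, the hypothesis that the limit $p=\lim_{x\to0^+}\chi_f(x)$ exists forces $q=p$ in Proposition~\ref{prop:QA}, so both extremes in the inequality chain there collapse to $C(p,\eta)$. Consequently, for every fixed $x>0$,
\[
\lim_{n\to\infty}\frac{\Lambda_n}{x}\QA{f}\bigl((x/\Lambda_1,\ldots,x/\Lambda_n),(\lambda_1,\ldots,\lambda_n)\bigr)=C(p,\eta).
\]
Substituting into Corollary~\ref{cor:HardyconstantforKedlaya} yields $\Hc[\lambda]{\QA{f}}\ge\Est[\lambda]{\QA{f}}\ge C(p,\eta)$, closing the sandwich. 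The only real obstacle is the identification $\Est[\lambda]{\P_p}=C(p,\eta)$, but once homogeneity eliminates the supremum, the remaining work is essentially just an invocation of Corollary~\ref{cor:A}.
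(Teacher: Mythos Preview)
Your proposal is correct and follows essentially the same sandwich argument as the paper: the lower bound via Corollary~\ref{cor:HardyconstantforKedlaya} together with Proposition~\ref{prop:QA} (with $q=p$), and the upper bound via the comparison $\QA{f}\le\P_p$ followed by Proposition~\ref{prop:BJ}. The only cosmetic difference is that the paper identifies $\Hc[\lambda]{\P_p}$ with $C(p,\eta)$ by reapplying Proposition~\ref{prop:QA} to $f=\pi_p$, whereas you compute $\Est[\lambda]{\P_p}$ directly from Corollary~\ref{cor:A} (and handle $p=0$ separately via Proposition~\ref{prop:genA}); since the proof of Proposition~\ref{prop:QA} itself rests on Corollary~\ref{cor:A}, the two routes are equivalent.
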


\begin{proof}
By Corollary~\ref{cor:HardyconstantforKedlaya} and Proposition~\ref{prop:QA} we have
 \Eq{*}{
 \Hc[\lambda]{\QA{f}} \ge \sup_{x>0} \liminf_{n \to \infty} \frac{\Lambda_n}{x} \QA{f} \bigg(\Big(\frac{x}{\Lambda_1},\dots,\frac{x}{\Lambda_n}\Big),(\lambda_1,\dots,\lambda_n)\bigg)=C(p,\eta).
 }
Furthermore, by $\chi_f(x)\le p$ we get $\QA{f} \le \P_p$ so 
\Eq{*}{
\Hc[\lambda]{\QA{f}}\le \Hc[\lambda]{\P_p}.
}
But $\P_p$ is repetition invariant and concave, thus it is a $\lambda$-Kedlaya mean (in the sense of our paper \cite{PalPas18b}). Thus, by Proposition~\ref{prop:BJ} and Proposition~\ref{prop:QA}, 
\Eq{*}{
\Hc[\lambda]{\P_p}\le \liminf_{n \to \infty} \frac{\Lambda_n}{x} \P_p \bigg(\Big(\frac{x}{\Lambda_1},\dots,\frac{x}{\Lambda_n}\Big),(\lambda_1,\dots,\lambda_n)\bigg) = C(p,\eta).
}
Binding all these inequalities, we get
\Eq{*}{
C(p,\eta)\le \Hc[\lambda]{\QA{f}} \le \Hc[\lambda]{\P_p} \le C(p,\eta),
}
which implies $\Hc[\lambda]{\QA{f}}=C(p,\eta)$.
\end{proof}

\begin{thm}\label{thm:HDM}
Let $(\lambda_n)\in W_0$ such that $\Lambda_n\to\infty$ and $\big(\tfrac{\lambda_n}{\Lambda_n}\big)_{n=1}^{\infty}$ is nonincreasing with limit $\eta\in[0,1)$. Let $f:\R_+\to\R$ be a concave function such that $\sign(f(x))=\sign(x-1)$ holds for all $x\in\R_+$. Then the homogeneous quasideviation mean $\E_f$ is $\lambda$-Hardy if and only if function $x\mapsto f(1/x)$ is integrable over $(0,1]$. In the latter case, $c:=\Hc[\lambda]{\E_f}$ is the unique solution of the equation \eq{E:CEf}.
\end{thm}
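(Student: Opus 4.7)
The strategy is to reduce the Hardy constant $\Hc[\lambda]{\E_f}$ to the auxiliary quantity $\Est[\lambda]{\E_f}$ via Proposition~\ref{prop:BJ}, and then to evaluate $\Est[\lambda]{\E_f}$ using Propositions~\ref{prop:CEf} and \ref{prop:genA}. By Lemma~\ref{lem:MIA2.3}, $\E_f$ is homogeneous, continuous, nondecreasing and concave, hence Jensen concave and monotone; as a quasideviation mean it is also symmetric and continuous in the weights. Together with the nonincreasingness of $(\lambda_n/\Lambda_n)$ and $\sum\lambda_n=\infty$, Proposition~\ref{prop:BJ} yields $\Hc[\lambda]{\E_f}=\Est[\lambda]{\E_f}$. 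The homogeneity of $\E_f$ causes the parameter $y$ to cancel in the definition of $\Est[\lambda]{\E_f}$, so setting $m_n:=\Lambda_n\cdot\Ef\limits_{k=1}^n\big(\tfrac{1}{\Lambda_k},\lambda_k\big)$ we obtain $\Est[\lambda]{\E_f}=\liminf_{n\to\infty}m_n$, where $m_n$ is characterized by the defining equation
\Eq{*}{
  \sum_{i=1}^n\frac{\lambda_i}{\Lambda_n}\,f\Big(\frac{\Lambda_n}{m_n\Lambda_i}\Big)=0.
}

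If $x\mapsto f(1/x)$ is integrable over $(0,1]$, Proposition~\ref{prop:CEf} identifies $\Est[\lambda]{\E_f}$ with the unique solution $c\in(1,\infty)$ of \eq{E:CEf}. Hence $\Hc[\lambda]{\E_f}=c<\infty$, so $\E_f$ is $\lambda$-Hardy with Hardy constant satisfying \eq{E:CEf}.

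Conversely, suppose $x\mapsto f(1/x)$ is not integrable over $(0,1]$; the plan is to show $\liminf_n m_n=+\infty$, which forces $\Hc[\lambda]{\E_f}=+\infty$. Arguing by contradiction, assume this liminf equals some finite $m^*$, fix any $M>m^*$ and extract a subsequence $(n_j)$ with $m_{n_j}<M$. Since $f$ is nondecreasing (an elementary consequence of concavity combined with $\sign f(x)=\sign(x-1)$), termwise comparison in the defining equation gives
\Eq{*}{
  0 = \sum_{i=1}^{n_j}\frac{\lambda_i}{\Lambda_{n_j}}\,f\Big(\frac{\Lambda_{n_j}}{m_{n_j}\Lambda_i}\Big) \ge \sum_{i=1}^{n_j}\frac{\lambda_i}{\Lambda_{n_j}}\,\varphi_M\Big(\frac{\Lambda_i}{\Lambda_{n_j}}\Big),
}
where $\varphi_M(x):=f\big(\tfrac{1}{Mx}\big)$ is continuous and nonincreasing on $(0,1]$. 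A direct change of variable shows that the integrability of $\varphi_M$ on $(0,1]$ is equivalent to that of $x\mapsto f(1/x)$; thus $\int_0^1\varphi_M=+\infty$, and by Lemma~\ref{lem:eq-conv} (with $q=1-\eta$) the corresponding geometric series also diverges to $+\infty$. Proposition~\ref{prop:genA} continues to apply in the non-integrable regime and forces the right-hand side above to tend to $+\infty$, contradicting its nonpositivity along $(n_j)$.

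The main technical point is the last paragraph: one has to verify that the conclusion of Proposition~\ref{prop:genA} persists with value $+\infty$ when the target integral (or geometric series) diverges, and that the termwise monotonicity substitution replacing the $m_n$-dependent integrand by $\varphi_M$ cleanly transforms non-integrability of $f(1/\cdot)$ into divergence of the averaged sums. The remaining work is essentially bookkeeping: assembling the chain $\Hc[\lambda]{\E_f}=\Est[\lambda]{\E_f}=\liminf_n m_n$ and feeding in the two propositions in each case.
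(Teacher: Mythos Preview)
Your proof is correct and takes essentially the same approach as the paper's: both directions hinge on Proposition~\ref{prop:BJ} (yielding $\Hc[\lambda]{\E_f}=\Est[\lambda]{\E_f}$), Proposition~\ref{prop:CEf} for the integrable case, and the subsequence extraction plus monotonicity substitution feeding into Proposition~\ref{prop:genA} (which indeed covers the divergent regime) for the converse. The only cosmetic difference is that you front-load the identity $\Hc[\lambda]{\E_f}=\Est[\lambda]{\E_f}$ and argue the necessity of integrability by contraposition, whereas the paper proves that direction directly from the lower bound in Corollary~\ref{cor:HardyconstantforKedlaya} before invoking Proposition~\ref{prop:BJ}.
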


\begin{proof}
Assume that $\E_f$ is $\lambda$-Hardy. Then, by Corollary~\ref{cor:HardyconstantforKedlaya},
\Eq{*}{
  \liminf_{n\to\infty}\Ef_{k=1}^n\bigg(\frac{\Lambda_n}{\Lambda_k},\lambda_k\bigg)=\Est[\lambda]{\E_f}\leq\Hc[\lambda]{\E_f}<\infty.
}
Then, there exists a strictly increasing sequence of integers $(n_i)$ such that
\Eq{*}{
  \Ef_{k=1}^{n_i}\bigg(\frac{\Lambda_{n_i}}{\Lambda_k},\lambda_k\bigg)<\Hc[\lambda]{\E_f}+1=:K \qquad(i\in\N),
}
which is equivalent to the inequality
\Eq{*}{
   \sum_{k=1}^{n_i}\frac{\lambda_k}{\Lambda_{n_i}} f\bigg(\frac{\Lambda_{n_i}}{K\Lambda_k}\bigg)<0
   \qquad(i\in\N).
}
Applying now Proposition~\ref{prop:genA} for the nonincreasing function $\varphi(x):=f\big(\tfrac1{Kx}\big)$, upon taking the limit $i\to\infty$, in the case when $\eta=0$, it follows that
\Eq{*}{
  \int_0^1f\Big(\frac{1}{Kx}\Big)dx\leq0,
}
while in the case $\eta\in(0,1)$, we get that
\Eq{*}{
  \sum_{k=0}^{\infty}\eta(1-\eta)^kf\Big(\frac{1}{K(1-\eta)^k}\Big)\leq0.
}
The first inequality implies that $\varphi$ is integrable over $(0,1]$, hence the mapping $x\mapsto f(1/x)$ is also integrable on $(0,1]$. In view of Lemma~\ref{lem:eq-conv}, the same conclusion is derived from the second inequality.

In the rest of the proof, assume that the mapping $x\mapsto f(1/x)$ is also integrable on $(0,1]$. Obviously $\E_f$ is a homogeneous, symmetric and continuously weighted mean. Moreover, in view of Lemma~\ref{lem:MIA2.3}, $\E_f$ is monotone and Jensen concave. Thus, by Proposition~\ref{prop:BJ}, $\Hc[\lambda]{\E_{f}}=\Est[\lambda]{\E_{f}}$. Consequently, applying Proposition~\ref{prop:CEf}, one obtains that $c=\Hc[\lambda]{\E_{f}}$ is a unique and finite solution of equation \eq{E:CEf}, indeed. In particular, this yields, that $\E_f$ is a $\lambda$-Hardy mean.
\end{proof}

An interesting consequence of the previous result is that a homogeneous quasideviation mean $\E_f$ is $\lambda$-Hardy (where $\lambda$ is like above) if and only if it is $\vone$-Hardy.

One of our main results is stated in the subsequent theorem.

\begin{thm}
Let $(\lambda_n)\in W_0$ such that $\Lambda_n\to\infty$ and $\big(\tfrac{\lambda_n}{\Lambda_n}\big)_{n=1}^{\infty}$ is nonincreasing with limit $\eta\in[0,1)$. Let $E : I \times I \to \R$ be a normalizable quasideviation such that $E^*$ is concave. Assume that, for all $x \in \R_+$, $\lim_{t \to 0}E^*(xt, t) = 0$ and define $h_E:\R_+\to\R$
by \eq{hE}. Then the quasideviation mean $\D_E$ is $\lambda$-Hardy if and only if the mapping $x\mapsto h_E(1/x)$ is integrable over $(0,1]$ and in this case, $c:=\Hc[\lambda]{\D_E}$ is a unique solution of \eqref{E:CEf} with $f:=h_E$.
\end{thm}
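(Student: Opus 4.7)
The proof should be essentially a direct chaining of three already-established tools: Lemma~\ref{lem:AMH6.3}, Theorem~\ref{thm:M*}, and Theorem~\ref{thm:HDM}. The plan is to use the homogenization $(\D_E)_\#$ as a bridge between the quasideviation mean $\D_E$ (whose Hardy property we want to characterize) and the homogeneous quasideviation mean $\E_{h_E}$ (to which Theorem~\ref{thm:HDM} directly applies).

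First, I would invoke Lemma~\ref{lem:AMH6.3} to extract all the structural information about the generator. Under the stated assumptions on $E$, the limit \eq{hE} exists, the function $h_E\colon\R_+\to\R$ is concave, nondecreasing, strictly increasing on $(0,1)$, and satisfies the sign condition $\sign(h_E(x))=\sign(x-1)$. Moreover, the lemma gives us simultaneously that $\D_E$ is itself Jensen concave and monotone, and that
\Eq{*}{
\E_{h_E}=(\D_E)_\#=(\D_E)^\#.
}

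Next, I would appeal to Theorem~\ref{thm:M*}: since $\D_E$ is Jensen concave, the inequality $\Hc[\lambda]{(\D_E)_\#}\le\Hc[\lambda]{\D_E}$ must hold with equality. Combined with the identification above, this gives the crucial identity
\Eq{*}{
\Hc[\lambda]{\D_E}=\Hc[\lambda]{\E_{h_E}}.
}
Hence the $\lambda$-Hardy property of $\D_E$ is equivalent to the $\lambda$-Hardy property of the homogeneous quasideviation mean $\E_{h_E}$, and the corresponding Hardy constants coincide.

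Finally, since $h_E$ is a concave function satisfying the sign condition, Theorem~\ref{thm:HDM} applies with $f:=h_E$. It yields that $\E_{h_E}$ is $\lambda$-Hardy if and only if $x\mapsto h_E(1/x)$ is integrable over $(0,1]$, and in this case $\Hc[\lambda]{\E_{h_E}}$ is the unique solution of \eq{E:CEf} with $f:=h_E$. Transporting this conclusion back to $\D_E$ via the Hardy constant identity above finishes the proof. There is no real obstacle here; all the analytic work has been absorbed into Lemma~\ref{lem:AMH6.3} (which produces a well-behaved homogeneous surrogate), Theorem~\ref{thm:M*} (which says passing to the homogenization preserves the Hardy constant under Jensen concavity), and Theorem~\ref{thm:HDM} (which handles the homogeneous case via Proposition~\ref{prop:CEf}). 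The only point requiring a moment's care is to verify that the hypotheses needed for those three results are indeed implied by the hypotheses on $E$ as stated, which is immediate from Lemma~\ref{lem:AMH6.3}.
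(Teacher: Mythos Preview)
Your proposal is correct and matches the paper's own proof essentially step for step: invoke Lemma~\ref{lem:AMH6.3} to obtain the properties of $h_E$ together with the Jensen concavity of $\D_E$ and the identification $(\D_E)_\#=\E_{h_E}$, use the equality case of Theorem~\ref{thm:M*} to get $\Hc[\lambda]{\D_E}=\Hc[\lambda]{\E_{h_E}}$, and then apply Theorem~\ref{thm:HDM} to $\E_{h_E}$. The only cosmetic difference is that the paper separates the two implications (using just the inequality of Theorem~\ref{thm:M*} for the forward direction and citing Proposition~\ref{prop:CEf} explicitly in the backward one), whereas you establish the Hardy-constant identity once and transfer both directions at once.
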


\begin{proof}
First, by Lemma~\ref{lem:AMH6.3} we know that $f:=h_E$ is correctly defined. Furthermore it is nondecreasing on $(0,\infty)$, strictly increasing on $(0,1)$, and admits the sign property $\sign(f(x))=\sign(x-1)$ and $\E_f$ is a homogeneous quasideviation mean. 

First assume that $\D_E$ is a $\lambda$-Hardy mean. Then, by Theorem~\ref{thm:M*}, $\big(\D_E\big)_\#$ is also a $\lambda$-Hardy mean. On the other hand, Lemma~\ref{lem:AMH6.3} implies that $\big(\D_E\big)_\#=\E_{f}$,
hence, we get that $\E_{f}$ is a $\lambda$-Hardy mean, too. By the previous theorem, this implies that the mapping $x\mapsto f(1/x)$ is integrable over $(0,1]$.

In the rest of the proof, assume that the mapping $x\mapsto f(1/x)$ is integrable over $(0,1]$. In view of Proposition~\ref{prop:CEf}, we have that $c:=\Hc[\lambda]{\E_f}$ is a unique solution of \eqref{E:CEf}.
On the other hand, by Theorem~\ref{thm:M*}, we have that $\Hc[\lambda]{\D_E}=\Hc[\lambda]{\E_f}$, which yields that $\Hc[\lambda]{\D_E}=c$.
\end{proof}

\begin{cor}\label{cor:Gini}
Let $(\lambda_n)\in W_0$ such that $\Lambda_n\to\infty$ and $\big(\tfrac{\lambda_n}{\Lambda_n}\big)_{n=1}^{\infty}$ is nonincreasing with a limit $\eta\in[0,1)$. Let $p,\,q \in \R$, $\min(p,q)\le 0 \le \max(p,q)<1$. Then 
\Eq{*}{
\Hc[\lambda]{\G_{p,q}}=
\begin{cases}
\bigg(\dfrac{1-(1-\eta)^{1-q}}{1-(1-\eta)^{1-p}}\bigg)^{\tfrac{1}{p-q}}, &\quad \eta \in (0,1) \text{ and } p\ne q;\\[1mm]
\bigg(\dfrac{1-q}{1-p}\bigg)^{\tfrac{1}{p-q}}, &\quad \eta = 0 \text{ and } p\ne q; \\[4mm]
(1-\eta)^{1-1/\eta}, &\quad \eta \in (0,1) \text{ and } p=q= 0;\\[1mm]
e, &\quad \eta = 0 \text{ and } p=q=0.
\end{cases}
}
\end{cor}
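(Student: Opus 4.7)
The plan is to realize each Gini mean in the stated parameter range as a homogeneous quasideviation mean of the form $\E_f$ from Lemma~\ref{lem:MIA2.3} and then apply Theorem~\ref{thm:HDM} directly. By the manifest symmetry $\G_{p,q}=\G_{q,p}$ I may assume $p\le q$, so the hypothesis becomes $p\le 0\le q<1$. Inspection of the defining formula for $\G_{p,q}$ shows that $\G_{p,q}=\E_f$ with
\Eq{*}{
f(x):=\begin{cases}\dfrac{x^{q}-x^{p}}{q-p}&\text{if }p<q,\\[1mm] \log x &\text{if }p=q=0,\end{cases}
}
since in both cases the equation $\sum\lambda_i f(x_i/y)=0$ is equivalent to $y=\G_{p,q}(x,\lambda)$.

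The next step is to verify that $f$ meets the hypotheses of Theorem~\ref{thm:HDM}. The sign property $\sign(f(x))=\sign(x-1)$ and the continuity of $f$ are immediate. For concavity in the case $p<q$, the second derivative is $(q-p)^{-1}\bigl(q(q-1)x^{q-2}-p(p-1)x^{p-2}\bigr)$, and under $p\le 0\le q<1$ both summands are nonpositive, so $f''\le 0$; concavity of $\log$ is classical. Integrability of $x\mapsto f(1/x)$ over $(0,1]$ follows, when $p<q$, from $q<1$ (which keeps $x^{-q}$ integrable at $0$) combined with $p\le 0$ (which keeps $x^{-p}$ bounded); for $p=q=0$ the function $-\log x$ is integrable on $(0,1]$.

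By Theorem~\ref{thm:HDM}, $c:=\Hc[\lambda]{\G_{p,q}}$ is then the unique solution of equation~\eqref{E:CEf} with this $f$. What remains is to solve~\eqref{E:CEf} in each of the four parameter regimes. For $p<q$ and $\eta=0$, evaluating the integral in closed form reduces the equation to $c^{-q}/(1-q)=c^{-p}/(1-p)$. For $p<q$ and $\eta\in(0,1)$, the equation splits into two convergent geometric series with ratios $(1-\eta)^{1-p},(1-\eta)^{1-q}\in(0,1)$, giving $c^{-q}/\bigl(1-(1-\eta)^{1-q}\bigr)=c^{-p}/\bigl(1-(1-\eta)^{1-p}\bigr)$. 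In either case, isolating $c$ produces the stated formulas. For $p=q=0$ and $\eta=0$, the identity $\int_0^1(-\log x)\,dx=1$ immediately yields $c=e$. For $p=q=0$ and $\eta\in(0,1)$, the evaluations $\sum_{k\ge 0}(1-\eta)^k=1/\eta$ and $\sum_{k\ge 0}k(1-\eta)^k=(1-\eta)/\eta^2$ rearrange the equation to $\log c=(1-1/\eta)\log(1-\eta)$, i.e.\ $c=(1-\eta)^{1-1/\eta}$.

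The main obstacle is really only bookkeeping: simultaneously handling the nondegenerate case $p<q$ and the degenerate geometric-mean case $p=q=0$, and keeping the four sub-cases of~\eqref{E:CEf} separate. The hypothesis $\min(p,q)\le 0\le\max(p,q)<1$ is exactly what delivers concavity of $f$ together with integrability of $f(1/\cdot)$, after which Theorem~\ref{thm:HDM} supplies the conclusion with no further work.
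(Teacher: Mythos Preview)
Your proof is correct and follows the same route as the paper: realize $\G_{p,q}$ as $\E_f$, verify concavity, the sign condition, and integrability of $x\mapsto f(1/x)$, then solve~\eqref{E:CEf} explicitly. The one organizational difference is that the paper peels off the cases where $p=0$ or $q=0$ (so that $\G_{p,q}$ reduces to a power mean) and dispatches them via Corollary~\ref{cor:QA} before assuming strict inequalities $p<0<q<1$, whereas you handle all cases, including $p=q=0$ with $f=\log$, uniformly through Theorem~\ref{thm:HDM}; this is slightly cleaner and avoids the detour through Corollary~\ref{cor:QA}.
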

\begin{proof}
 Fix $p,q$ like above. In the  case $p=0$ (resp.\ $q=0$), we have $\G_{p,q}=\P_q$ (resp. $\G_{p,q}=\P_p$) and the assertion is implied by Corollary~\ref{cor:QA}. As $\G_{p,q}=\G_{q,p}$ and the right hand side is symmetric, we can assume that $p<0<q<1$. 
 
 Observe that Gini means are homogeneous deviation means -- more precisely $\G_{p,q}=\E_f$ with $f(x)=\frac{x^p-x^q}{p-q}$. The condition $p<0<q<1$ implies that $f$ is concave, satisfies the sign condition and the mapping $x\mapsto f(1/x)$ is integrable. Therefore, Theorem~\ref{thm:HDM} yields that $\Hc[\lambda]{\G_{p,q}}$ is the unique solution $c$ of equation \eq{E:CEf}.
 
 Let us now split our considerations into two parts. For $\eta=0$, we have
 \Eq{*}{
 0=\int_0^1 f\Big(\frac{1}{cx}\Big)dx=\int_0^1 \frac{c^{-p}}{p-q} x^{-p} -\frac{c^{-q}}{p-q} x^{-q}dx= \frac{1}{p-q} \cdot  \Big(\frac{c^{-p}}{1-p}-\frac{c^{-q}}{1-q}\Big),
 }
which, after an easy transformation, is equivalent to $c=\big(\tfrac{1-q}{1-p}\big)^{1/(p-q)}$.

For $\eta>0$, we need to solve the second equation of \eq{E:CEf}, which in our setting states
\Eq{*}{
\frac{1}{p-q} \cdot \sum\limits_{k=0}^{\infty} (1-\eta)^k \big(c^{-p} (1-\eta)^{-kp} - c^{-q} (1-\eta)^{-kq}\big)=0.
}
As $\eta \in (0,1)$, we can calculate the sums of the geometric series to obtain
\Eq{*}{
\frac{1}{p-q} \cdot \Big(\frac{c^{-p}}{1-(1-\eta)^{1-p}}-\frac{c^{-q}}{1-(1-\eta)^{1-q}}\Big)=0.
}
As $p \ne q$ and $c \ne 0$, it implies 
\Eq{*}{
c^{p-q}=\frac{1-(1-\eta)^{1-q}}{1-(1-\eta)^{1-p}},
}
and yields the assertion in the last case.
\end{proof}

\begin{xrem}
It is worth mentioning that (exept the case $p=q=0$) we have the equality $\Hc[\lambda]{\G_{p,q}}=C(p,\eta)^{p/(p-q)} C(q,\eta)^{q/(q-p)}$. As a matter of fact, this assertion could be obtained using a similar identity: $\G_{p,q}(x,\lambda)=\P_p(x,\lambda)^{p/(p-q)}\P_q(x,\lambda)^{q/(q-p)}$, which is valid for all $p,q \in\R$, $p \ne q$ and all admissible pairs $(x,\lambda)$. 
\end{xrem}


\def\cprime{$'$} \def\R{\mathbb R} \def\Z{\mathbb Z} \def\Q{\mathbb Q}
  \def\C{\mathbb C}

\end{document}